\newtheorem{theorem}{Theorem}
\newtheorem{lemma}[theorem]{Lemma}
\newtheorem{proposition}[theorem]{Proposition}
\newtheorem{corollary}[theorem]{Corollary}
\newtheorem{example}[theorem]{Example}
\newtheorem{remark}[theorem]{Remark}
\numberwithin{equation}{section}
\def\bs#1{\mathbf{#1}}
\def\fs{\Psi}
\def\prs{\Delta_+}
\def\gr{{\frak g}}
\def\abs#1{\lvert#1\rvert}
\def\norm#1{\lVert#1\rVert}
\begin{document}

\title[Zeta-functions of weight lattices of Lie groups]{Functional relations for zeta-functions of weight lattices of Lie groups of type $A_3$}

\author{Yasushi Komori, Kohji Matsumoto and Hirofumi Tsumura} %could use \thanks{} inside

\address{Y.\,Komori:\ Department of Mathematics, Rikkyo University, Nishi-Ikebukuro, Toshima-ku, Tokyo 171-8501, Japan\\
e-mail: komori@rikkyo.ac.jp}
\address{K.\,Matsumoto:\ Graduate School of Mathematics, Nagoya University, Chikusa-ku, Nagoya 464-8602, Japan\\
Email: kohjimat@math.nagoya-u.ac.jp} %for second author
\address{H.\,Tsumura:\ Department of Mathematics and Information Sciences, Tokyo Metropolitan University, Hachioji, Tokyo 192-0397, Japan\\
e-mail: tsumura@tmu.ac.jp} %for second author
%\Dedication{}
\maketitle

\begin{abstract}
We study zeta-functions of weight lattices 
of compact connected semisimple Lie groups of type $A_3$. Actually we consider zeta-functions of $SU(4)$, $SO(6)$ and $PU(4)$, and give some functional relations and new classes of evaluation formulas for them.
\end{abstract}

\baselineskip 16pt

%%%%%%%%%%%%%%%%%%%%%%%%%%%%%%%%%%%%%%%%%%%
\section{Introduction}\label{sec-1}
%%%%%%%%%%%%%%%%%%%%%%%%%%%%%%%%%%%%%%%%%%%

For any semisimple Lie algebra ${\frak g}$, the Witten zeta-function 
$\zeta_W(s;{\frak g})$ is defined by
\begin{equation}
\zeta_W(s;{\frak g})=\sum_{\rho}\,\left({\rm dim}\,\rho\right)^{-s}, \label{eq-1-1}
\end{equation}
where $s \in \mathbb{C}$ and $\rho$ runs over all finite dimensional irreducible representations of $\gr$.    This was formulated by Zagier (see \cite[Section 7]{Za}),
who was inspired by Witten's work \cite{Wi}. 
Witten's motivation of introducing the above zeta-functions is to
express the volumes of certain moduli spaces in terms of special values of
$\zeta_W(s;{\frak g})$.
The result is called Witten's volume formula, and from which it can be shown that
\begin{align}
  \label{1-2}
    \zeta_W(2k;{\frak g})=C_{W}(2k,{\frak g})\pi^{2kn}
\end{align}
for any $k\in{\Bbb N}$, where 
$C_{W}(2k,{\frak g})\in{\Bbb Q}$ (see \cite[Theorem, p.506]{Za}).
The explicit value of $C_{W}(2k,{\frak g})$ was not determined in their work.

Gunnells and Sczech introduced a method to compute $C_{W}(2k,{\frak g})$ explicitly 
(see \cite{GS}).    The theory of Szenes (\cite{Sz98}, \cite{Sz03}) also gives a
different algorithm of computing $C_{W}(2k,{\frak g})$.

Let $r$ be the rank of ${\frak g}$, $\Delta({\frak g})$ the root system
corresponding to ${\frak g}$, and $n$ the number of positive roots belonging to
$\Delta({\frak g})$.   In \cite{KMT, KMTpja, KM2, MT}, the authors defined the
zeta-function $\zeta_r({\bf s};\Delta({\frak g}))$ of the root system
$\Delta({\frak g})$, where ${\bf s}=(s_i)\in{\Bbb C}^n$ (see Section \ref{sec-2}).   
This may be regarded as
a multi-variable version of $\zeta_W(s;{\frak g})$ (see also survey papers
\cite{KMT-Mem}, \cite{KMT-Bessatsu}).
The authors further introduced a generalization of Bernoulli polynomials
associated with root systems in \cite{KMTpja,KM5,KM3}.
Using these tools, we can generalize \eqref{1-2} 
(\cite[Theorem 4.6]{KM3}) and express $C_{W}(2k,{\frak g})$ in terms of Bernoulli
polynomials of root systems (see also \cite{KMT-Mem}), hence gives another algorithm
for computing $C_{W}(2k,{\frak g})$.     Moreover we can give various functional 
relations for zeta-functions of root systems (see 
\cite{KMT,KMTpja,KMTJC,KM5,KM3,KM4,MT}; we will discuss this matter further in 
the next section).

More recently, the authors defined zeta-functions of weight lattices of 
compact connected semisimple Lie groups (see \cite{KMT-Lie}). 
If the group is simply-connected, these zeta-functions coincide with 
ordinary zeta-functions of root systems of associated Lie algebras.
We considered the general connected (but not necessarily 
simply-connected) case and proved a result analogous to \eqref{1-2} for these 
zeta-functions, and further prove functional relations among them. 
The present paper is a continuation of \cite{KMT-Lie}, and we study zeta-functions
of lattices of Lie groups whose associated Lie algebras are of type $A_3$.
The reason why we treat the case $A_3$ will be mentioned in Section \ref{sec-2}.

Throughout this paper, let ${\Bbb N}$ be the set of positive integers, ${\Bbb N}_0={\Bbb N}\cup\{0\}$,
${\Bbb Z}$ the ring of rational integers, ${\Bbb Q}$ the rational number field,
${\Bbb R}$ the real number field, and ${\Bbb C}$ the complex number field.

\ 

%%%%%%%%%%%%%%%%%%%%%%%%%%%%%%%%%%%%%%%%%%%
\section{Functional relations: a motivation}
\label{sec-1.5}
%%%%%%%%%%%%%%%%%%%%%%%%%%%%%%%%%%%%%%%%%%%

In this section we comment our motivation on the study of functional 
relations.\footnote{The contents of this section was given            
in the talk of the second-named author on the problem session of the conference.}
The Euler-Zagier $r$-ple sum is defined by
\begin{align}\label{15-1}
\zeta_{EZ,r}({\bf s})=\sum_{m_1,\ldots,m_r=1}^{\infty}\frac{1}{m_1^{s_1}
(m_1+m_2)^{s_2}\cdots(m_1+\cdots+m_r)^{s_r}},
\end{align}
where ${\bf s}=(s_1,\ldots,s_r)\in{\Bbb C}^r$ (see \cite{Ho}, \cite{Za}).
Special values of \eqref{15-1} for ${\bf s}={\bf k}$, where 
${\bf k}=(k_1,\ldots,k_r)\in {\Bbb N}^r$ with $k_r\geq 2$, are known to be
important in various fields of mathematics.   Euler already obtained the following 
relations among those values in the case $r=2$:
The harmonic product relation
\begin{align}\label{15-2}
\zeta(k_1)\zeta(k_2)=\zeta_{EZ,2}(k_1,k_2)+\zeta_{EZ,2}(k_2,k_1)+\zeta(k_1+k_2),
\end{align}
where $\zeta(s)=\zeta_{EZ,1}(s)$ is the Riemann zeta-function, and the sum formula
\begin{align}\label{15-3}
\sum_{j=2}^{k-1}\zeta_{EZ,2}(k-j,j)=\zeta(k)
\end{align}
for $k\in{\Bbb N}$, $k\geq 3$.   After the discovery of the importance of
Euler-Zagier sums (around 1990, according to the work of Drinfel'd, Goncharov,
Kontsevich, Hoffman and Zagier), many people began to search for various 
relations among special values of \eqref{15-1}, and indeed a lot of relations
have been discovered.

Around 2000, the second-named author raised a question: are those relations valid
only at positive integers, or valid also continuously at other values?

In fact, it is easy to see that \eqref{15-2} is valid for any complex numbers
$s_1,s_2$ except for singularities, that is
\begin{align}\label{15-4}                                                              
\zeta(s_1)\zeta(s_2)=\zeta_{EZ,2}(s_1,s_2)+\zeta_{EZ,2}(s_2,s_1)+\zeta(s_1+s_2).       
\end{align}
Therefore the harmonic product relation is actually a ``functional relation''.
So far, except for \eqref{15-4} and its relatives, no other such functional
relations among Euler-Zagier sums has been discovered.
(In the double zeta case, the functional equation \cite{KMT-Debrecen} is known,
but it is not a formula which interpolates some known value-relation.)

However, when we consider more extended classes of multiple series, we can
find a lot of functional relations!   The first examples were reported by the
third-named author \cite{TsC}, in which functional relations among $\zeta(s)$
and the Tornheim double sum
\begin{align}\label{15-5}
\zeta_{MT,2}(s_1,s_2,s_3)=\sum_{m,n=1}^{\infty}\frac{1}{m^{s_1}n^{s_2}
(m+n)^{s_3}}
\end{align}
are proved.   Those relations can be regarded as functional relations among
zeta-functions of root systems, because $\zeta_{MT,2}(s_1,s_2,s_3)$ coincides with
the zeta-function of the root system $\Delta({\frak su}(3))$.

It is known that irreducible root systems are classified as types $X_r$
(where $X$ is one of $A,B,C,D,E,F,G$) by the Killing-Cartan theory.   When $\Delta({\frak g})$
is of type $X_r$, we denote the corresponding zeta-function as
$\zeta_r({\bf s};X_r)$.   Using this notation, we see that \eqref{15-5} is
$\zeta_2({\bf s};A_2)$.

Various other functional relations have then been proved in several articles of the
authors: on $A_2$ (\cite{KM3}), 
on $A_3$ (\cite{MT}, \cite{KMT}, \cite{KMTJC}, \cite{KMT-Z}), 
on $B_2=C_2$ (\cite{KMTJC}, \cite{KM5}), on $B_3$ and $C_3$ (\cite{KMTJC}), 
and on $G_2$ (\cite{KM4}).
A general treatment on the theory of functional relations is given in
\cite{KM3}, \cite{KM5}.
Those functional relations in fact include various known value-relations among
special values of Euler-Zagier sums, and also include Witten's formula \eqref{1-2}
in several cases.

Moreover, functional relations also exist among zeta-functions of lattices of
(not necessarily simply-connected) Lie groups in the sense of \cite{KMT-Lie}.
In \cite{KMT-Lie}, we proved functional relations among zeta-functions whose 
associated root systems are of type $A_2$ or $C_2$.

In this paper we study zeta-functions of 
weight lattices of compact connected semisimple Lie groups of  type $A_3$. 
More precisely, we consider zeta-functions of $SU(4)$, $SO(6)$ and $PU(4)$.

In Section \ref{sec-2}, we recall the definition of those zeta-functions. In Section 
\ref{sec-3}, we prepare some lemmas which will be necessary later. 
Then in the remaining sections
we prove some functional relations for those zeta-functions which are the 
main results in this paper (see Theorems \ref{P-FR-A_3}, \ref{P-FR-A_3-2}, \ref{T-SO6}, \ref{T-6-3} and \ref{T-6-4}). 
Moreover we give new classes of evaluation formulas for these zeta-functions in terms 
of the Riemann zeta-function (see Propositions \ref{P-new} and \ref{P-SO6}, and 
Examples \ref{Exam-A3} and \ref{Exam-SO6}) and the Dirichlet $L$-function with
the primitive character of conductor 4 (see Proposition \ref{P-new-2} and Examples \ref{Ex-6-1} and \ref{Ex-6-2}).

\

%%%%%%%%%%%%%%%%%%%%%%%%%%%%%%%%%%%%%%%%%%%
\section{Zeta-functions of weight lattices}\label{sec-2}
%%%%%%%%%%%%%%%%%%%%%%%%%%%%%%%%%%%%%%%%%%%

In this section, we recall the definition and some properties of zeta-functions of weight lattices of compact connected semisimple Lie groups which we considered in our previous paper \cite[Section 3]{KMT-Lie}. 

We first prepare the same notation as in \cite{KM5,KM2,KM3} 
(see also \cite{KMT,KMTpja,KMT-Mem,KM4}). 
Let $V$ be an $r$-dimensional real vector space equipped with an inner product $\langle \cdot,\cdot\rangle$.
The norm $\norm{\cdot}$ is defined by $\norm{v}=\langle v,v\rangle^{1/2}$.
The dual space $V^*$ is identified with $V$ via the inner product of $V$.
Let $\Delta$ be a finite reduced root system which may not be irreducible, and
$\fs=\{\alpha_1,\ldots,\alpha_r\}$ its fundamental system.
We fix 
$\Delta_+$ and $\Delta_-$ as the set of all positive roots and negative roots respectively.
Then we have a decomposition of the root system $\Delta=\Delta_+\coprod\Delta_-$ .
Let $Q=Q(\Delta)$ be the root lattice, $Q^\vee$ the coroot lattice,
$P=P(\Delta)$ the weight lattice, $P^\vee$ the coweight lattice,
and $P_+$ the set of integral dominant weights 
%and
%$P_{++}$ the set of integral strongly dominant weights
defined by
\begin{align}
&  Q=\bigoplus_{i=1}^r\mathbb{Z}\,\alpha_i,\qquad
  Q^\vee=\bigoplus_{i=1}^r\mathbb{Z}\,\alpha^\vee_i,\\
& P=\bigoplus_{i=1}^r\mathbb{Z}\,\lambda_i, \qquad 
  P^\vee=\bigoplus_{i=1}^r\mathbb{Z}\,\lambda^\vee_i,\\
&   P_+=\bigoplus_{i=1}^r\mathbb{N}_0\,\lambda_i, 
\end{align}
respectively, where the fundamental weights $\{\lambda_j\}_{j=1}^r$
and
the fundamental coweights $\{\lambda_j^\vee\}_{j=1}^r$
are the dual bases of $\fs^\vee$ and $\fs$
satisfying $\langle \alpha_i^\vee,\lambda_j\rangle=\delta_{ij}$ and $\langle \lambda_i^\vee,\alpha_j\rangle=\delta_{ij}$ respectively.
%A coweight $\mu\in P^\vee$ is said to be minuscule if 
%$0\leq\langle\mu,\alpha\rangle\leq 1$ for all $\alpha\in\Delta$.
%It is known that a minuscule coweight is one of fundamental coweights
%and
%as a system of representatives for $P^\vee/Q^\vee$,
%we can take
%$\{0\}\cup\{\lambda_j^\vee\}_{j\in J}$, where
%$J$ is the set of all indices of minuscule coweights.
Let
\begin{equation}
\rho=\frac{1}{2}\sum_{\alpha\in\Delta_+}\alpha=\sum_{j=1}^r\lambda_j
\label{rho-def}
\end{equation}
be the lowest strongly dominant weight.
%Then $P_{++}=P_++\rho$.
Let $\sigma_\alpha$ be the reflection with respect to a root $\alpha\in\Delta$ defined as 
\begin{equation}
 \sigma_\alpha:V\to V, \qquad \sigma_\alpha:v\mapsto v-\langle \alpha^\vee,v\rangle\alpha.
\end{equation}
For a subset $A\subset\Delta$, let
$W(A)$ be the group generated by reflections $\sigma_\alpha$ for all $\alpha\in A$. In particular, $W=W(\Delta)$ is the Weyl group, and 
$\{\sigma_j=\sigma_{\alpha_j}\,|\,1\leq j \leq r\}$ generates $W$. 
%For $w\in W$, denote 
%$\Delta_w=\Delta_+\cap w^{-1}\Delta_-$.

Let $\widetilde{G}$ be a simply-connected compact connected semisimple Lie group,
and ${\frak g}={\rm Lie}(\widetilde{G})$.
There is a one-to-one correspondence between 
a compact connected semisimple Lie group $G$ whose universal covering group is
$\widetilde{G}$, and a lattice $L$ with 
$Q(\Delta({\frak g}))\subset L\subset P(\Delta({\frak g}))$ up to 
automorphisms 
by taking  $L=L(G)$ as the weight lattice of $G$.
Let $L_+=P_+\cap L$. 

Now we recall the definition of the zeta-function of the weight lattice $L=L(G)$ of the semisimple Lie group $G$, that is,
\begin{equation}
  \zeta_r(\bs{s},\bs{y};G)=
  \zeta_r(\bs{s},\bs{y};L;\Delta):=\sum_{\lambda\in L_++\rho}
    e^{2\pi i\langle\bs{y},\lambda\rangle}\prod_{\alpha\in\Delta_+}
    \frac{1}{\langle\alpha^\vee,\lambda\rangle^{s_\alpha}},
\label{Lerch-zeta-L}
\end{equation}
where $\bs{y}\in V$.
Note that this zeta-function can be continued 
meromorphically to $\mathbb{C}^{n}$.
When ${\bf y}={\bf 0}$, we sometimes write this zeta-function as $\zeta_r({\bf s};G)$ or
$\zeta_r(\bs{s};L;\Delta)$ for brevity.
It is to be noted that if $G=\widetilde{G}$, then $L=P$ and 
$\zeta_r({\bf s};\widetilde{G})$ coincides with $\zeta_r({\bf s};{\frak g})$, also written as $\zeta_r({\bf s};\Delta({\frak g}))$ and $\zeta_r({\bf s};X_r)$ when $\Delta({\frak g})$ is of  type $X_r$,
which is called the zeta-function of the root system of  type $X_r$ studied in our previous papers (see, for example, \cite{KMT,KMT-Mem,KM2,KM3}).

In the present paper we concentrate our attention on the case of  type $A_3$, so we
write down the explicit form of zeta-functions in this case.
Let $\Delta=\Delta(A_3)$ with $\Psi=\{ \alpha_1,\,\alpha_2,\,\alpha_3\}$, 
$\Delta_{+}=\{ \alpha_1,\,\alpha_2,\,\alpha_3,\,\alpha_1+\alpha_2,\,\alpha_2+\alpha_3, 
\,\alpha_1+\alpha_2+\alpha_3\}$, 
$P=\sum_{j=1}^{3}\mathbb{Z}\lambda_j$ and $Q=\sum_{j=1}^{3}\mathbb{Z}\alpha_j$.
It is known that $P/Q\simeq \mathbb{Z}/4\mathbb{Z}$.
Therefore there is a unique intermediate lattice $L_1$ with
$P\supsetneq L_1 \supsetneq Q$, satisfying $(L_1:Q)=2$.
The group corresponding to $P$ (resp. $Q$) is $SU(4)$ (resp. $PU(4)$).   The group
$G=G(L_1)$ is $SU(4)/\{\pm 1\}$, which is known to be isomorphic to $SO(6)$.
We know (for the details, see \cite[Example 4.3]{KMT-Lie}) that
\begin{align}\label{zeta-A3-P}                                                         
 & \zeta_3 ({\bf s},\bs{y};SU(4))=\zeta_3 ({\bf s},\bs{y};P;A_3) \\                   
   & =\sum_{m_1,m_2,m_3=1}^\infty                                                     
     \frac{e^{2\pi i\langle\bs{y},m_1\lambda_1+m_2\lambda_2+m_3\lambda_3\rangle}}
   {m_1^{s_1}m_2^{s_2}m_3^{s_3}(m_1+m_2)^{s_4}                                       
     (m_2+m_3)^{s_5}(m_1+m_2+m_3)^{s_6}}, \notag                                      
\end{align}
with
$$
\lambda_1=\frac{3}{4}\alpha_1+\frac{1}{2}\alpha_2+\frac{1}{4}\alpha_3,
\quad
\lambda_2=\frac{1}{2}\alpha_1+\alpha_2+\frac{1}{2}\alpha_3,
\quad
\lambda_3=\frac{1}{4}\alpha_1+\frac{1}{2}\alpha_2+\frac{3}{4}\alpha_3.
$$
Note that $\zeta_3 ({\bf s},\bs{0};SU(4))=\zeta_3 ({\bf s};A_3)$.
Further we have
\begin{align}\label{zeta-A3-L}                                                          
  &\zeta_3({\bf s},\bs{y};SO(6))=\zeta_3 ({\bf s},\bs{y};L_1;A_3)\\                    
  &=\sum_{m_1,m_2,m_3=1 \atop m_1\equiv m_3\,(\text{mod}\,2)}^\infty                    
     \frac{e^{2\pi i\langle\bs{y},m_1\lambda_1+m_2\lambda_2+m_3\lambda_3\rangle}}
   {m_1^{s_1}m_2^{s_2}m_3^{s_3}(m_1+m_2)^{s_4}                                         
     (m_2+m_3)^{s_5}(m_1+m_2+m_3)^{s_6}}, \notag \\
   &\zeta_3({\bf s},\bs{y};PU(4))=\zeta_3 ({\bf s},\bs{y};Q;A_3)\label{zeta-A3-Q}                                                          
\\                       
 &=\sum_{m_1,m_2,m_3=1 \atop m_1+2m_2+3m_3 \equiv 2\ (\text{mod}\ 4)}^\infty            
     \frac{e^{2\pi i\langle\bs{y},m_1\lambda_1+m_2\lambda_2+m_3\lambda_3\rangle}}
    {m_1^{s_1}m_2^{s_2}m_3^{s_3}(m_1+m_2)^{s_4}  
     (m_2+m_3)^{s_5}(m_1+m_2+m_3)^{s_6}}.\notag                                        
\end{align}
In particular,
\begin{align}
& {\zeta}_3({\bf s},\lambda_1^\vee;SU(4))={\zeta}_3({\bf s},\lambda_1^\vee;P;A_3) \label{A3-lam1}\\
& \quad =\sum_{l,m,n=1}^\infty \frac{i^{3l+2m+n}}{l^{s_1}m^{s_2}n^{s_3}(l+m)^{s_4}(m+n)^{s_5}(l+m+n)^{s_6}},\notag\\
& {\zeta}_3({\bf s},\lambda_2^\vee;SU(4))={\zeta}_3({\bf s},\lambda_2^\vee;P;A_3) \label{A3-lam2}\\
& \quad =\sum_{l,m,n=1}^\infty \frac{(-1)^{l+n}}{l^{s_1}m^{s_2}n^{s_3}(l+m)^{s_4}(m+n)^{s_5}(l+m+n)^{s_6}},\notag\\
& {\zeta}_3({\bf s},\lambda_3^\vee;SU(4))={\zeta}_3({\bf s},\lambda_3^\vee;P;A_3) \label{A3-lam3}\\
& \quad =\sum_{l,m,n=1}^\infty \frac{i^{l+2m+3n}}{l^{s_1}m^{s_2}n^{s_3}(l+m)^{s_4}(m+n)^{s_5}(l+m+n)^{s_6}}.\notag
\end{align}
Note that 
\begin{align}
& {\zeta}_3((s_1,s_2,s_3,s_4,s_5,s_6),\lambda_1^\vee;SU(4)) ={\zeta}_3((s_3,s_2,s_1,s_5,s_4,s_6),\lambda_3^\vee;SU(4)), \label{change-1}\\
& {\zeta}_3((s_1,s_2,s_3,s_4,s_5,s_6),\lambda_2^\vee;SU(4)) ={\zeta}_3((s_3,s_2,s_1,s_5,s_4,s_6),\lambda_2^\vee;SU(4)),\label{change-2}
\end{align}
as well as
\begin{align}
{\zeta}_3((s_1,s_2,s_3,s_4,s_5,s_6);A_3) ={\zeta}_3((s_3,s_2,s_1,s_5,s_4,s_6);
A_3). \label{change-0}
\end{align}

There are several reasons why the study on the case $A_3$ deserves one paper.
First, since the case $A_2$ was studied in \cite{KMT-Lie}, it is a natural
continuation.
Second, since the functional relation for $\zeta_3 ({\bf s};A_3)$ given in
\cite{KMTJC} is restricted to the case of even integers, we supply a more general
result (Theorem \ref{P-FR-A_3-2}) here.
The third reason is that Lie algebras of type $A_r$ are the most interesting in view
of the theory of \cite{KMT-Lie}, because 
$|P(\Delta(A_r))/Q(\Delta(A_r))|\to\infty$ as $r\to\infty$ (hence there are many
intermediate lattices between $P$ and $Q$), while$|P/Q|$ remains small for any
Lie algebras of other types (see Bourbaki \cite{Bour}).

Lastly in this section
we quote here one more general result, which is a generalization and a 
refinement of \eqref{1-2}.

\begin{theorem}{\rm \cite[Theorem 3.2]{KMT-Lie}} \label{thm:W-Z} 
  For a compact connected semisimple Lie group $G$, let
  $\Delta=\Delta(G)$ be its root system, 
  and $L=L(G)$ be its weight lattice.
  Let
  $\mathbf{k}=(k_\alpha)_{\alpha\in\prs}\in\mathbb{N}^{n}$ {\rm
    ($n=|\Delta_+|$)} satisfying $k_\alpha=k_\beta$ whenever
  $\norm{\alpha}=\norm{\beta}$.  Let
  $\kappa=\sum_{\alpha\in\Delta_+}{2k_\alpha}$.  Then we have
% \begin{equation}
%   \begin{split}
%     \zeta_r& (2\mathbf{k},\bs{y};G) =\zeta_r(2\mathbf{k},\mathbf{y};L;\Delta)\\
%     & =\frac{(-1)^{n}}{\abs{W}}
%     \biggl(\prod_{\alpha\in\Delta_+}
%     \frac{(2\pi\sqrt{-1})^{2k_\alpha}}{(2k_\alpha)!}\biggr)
%     P(2\mathbf{k},\mathbf{y};L+\rho;\Delta),
%   \end{split}
% \end{equation}
% and in particular, 
for $\nu\in P^\vee/Q^\vee$,
\begin{equation}
  \label{eq:vol_formula}
  \begin{split}
    \zeta_r& (2\mathbf{k},\nu;G) =\zeta_r(2\mathbf{k},\nu;L;\Delta)\\
    & =\frac{(-1)^{n}}{\abs{W}}
    \biggl(\prod_{\alpha\in\Delta_+}
    \frac{(2\pi\sqrt{-1})^{2k_\alpha}}{(2k_\alpha)!}\biggr)
    \mathcal{P}(2\mathbf{k},\nu;L;\Delta) \in \mathbb{Q}\cdot 
    \pi^{\kappa},
  \end{split}
\end{equation}
where $\mathcal{P}(2\mathbf{k},\nu;L;\Delta)$ is the Bernoulli function associated
with $L$, defined in \cite{KMT-Lie}.
%where $\rho$ is the lowest strongly dominant weight {\rm (}see \eqref{rho-def}{\rm)}.
\end{theorem}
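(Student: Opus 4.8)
The plan is to reduce the claimed evaluation to the general Witten-type volume formula via Lie-theoretic structure, then identify the Bernoulli function on the lattice $L$ with an explicit generating-function quantity. First I would recall from \cite{KMT-Lie} the integral/contour representation (or the generating-function definition) of the Bernoulli function $\mathcal{P}(\mathbf{k},\nu;L;\Delta)$ associated with a lattice $Q\subset L\subset P$ and a class $\nu\in P^\vee/Q^\vee$; the key feature is that $\zeta_r(\mathbf{s},\nu;L;\Delta)$ is built from the full root-system zeta-function $\zeta_r(\mathbf{s};P;\Delta)$ by a finite average over characters of $P/L$, equivalently by restricting $\lambda\in L_++\rho$, and that for $\mathbf{s}=2\mathbf{k}$ this is a finite sum of values of a periodic Bernoulli-type function obtained by repeated application of the formula $\sum_{m\ge 1}e^{2\pi i m x}/m^{2k}=-(2\pi i)^{2k}B_{2k}(\{x\})/(2k)!$.

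Next I would invoke Witten's volume formula in the sharpened form of \cite[Theorem 4.6]{KM3} and the extension in \cite{KMT-Lie} to non-simply-connected groups, which already gives $\zeta_r(2\mathbf{k},\nu;G)\in\mathbb{Q}\cdot\pi^\kappa$; the remaining content of the theorem is the \emph{exact} constant, namely the factor $(-1)^n|W|^{-1}\prod_{\alpha\in\Delta_+}(2\pi\sqrt{-1})^{2k_\alpha}/(2k_\alpha)!$ times $\mathcal{P}(2\mathbf{k},\nu;L;\Delta)$. The mechanism for producing this constant is the Weyl-group symmetrization: one writes the Dirichlet series as an integral of $\prod_{\alpha\in\Delta_+}(\text{kernel in }t_\alpha)$ against the lattice sum over $L_++\rho$, uses the Weyl denominator formula to convert the sum over $P_+$ (or its $L$-analogue) into an alternating sum over $W$, and extracts the constant term; the averaging over $P/L$ does not affect the shape of the constant, only which translates $\nu$ of the Bernoulli function appear. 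I would verify the normalization constants by checking the rank-one factor and the condition $k_\alpha=k_\beta$ when $\|\alpha\|=\|\beta\|$, which is exactly what makes the product over $\Delta_+$ Weyl-invariant so that the symmetrization closes.

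The main obstacle will be bookkeeping the lattice $L$ correctly: one must track how the condition $\lambda\in L_++\rho$ interacts with the Weyl-group action and with the coweight class $\nu\in P^\vee/Q^\vee$, since $\nu$ enters both as the exponential twist $e^{2\pi i\langle\nu,\lambda\rangle}$ and (after summation) as the fractional part in the Bernoulli polynomials. Concretely, I expect the argument to hinge on the compatibility $\langle\nu,\alpha^\vee\rangle\in\mathbb{Z}$ for $\nu\in Q^\vee$ versus the genuinely fractional pairings for $\nu\in P^\vee\setminus Q^\vee$, and on the fact that summing a character of $P/L$ over $\lambda$ picks out precisely the lattice $L$; getting these two finite averages to commute with the Weyl symmetrization, with the correct multiplicity $1/|W|$ and sign $(-1)^n$, is the delicate point. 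Everything else — the passage from the contour integral to $B_{2k}$ values, and the rationality of the resulting finite combination — is routine once the simply-connected case of \cite{KM3} is in hand.

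(Remark: since the statement is quoted verbatim as \cite[Theorem 3.2]{KMT-Lie}, in the present paper one would simply cite it; the sketch above is how the proof in that reference proceeds.)
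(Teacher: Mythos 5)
The paper gives no proof of this theorem: it is quoted verbatim from \cite[Theorem 3.2]{KMT-Lie} and simply cited, exactly as you observe in your closing remark. Your sketch of how the proof runs in that reference (Weyl symmetrization of the lattice sum, reduction of $\sum_{m\ge 1}e^{2\pi imx}m^{-2k}$ to Bernoulli polynomials, and the finite character average over $P/L$ that selects the sublattice --- the same decomposition the paper itself uses via \cite[(3.8)]{KMT-Lie} in the $PU(4)$ example) is consistent with the cited work's methodology, so there is nothing to fault here.
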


Note that when $L=P$, \eqref{eq:vol_formula} coincides with our previous result 
in \cite[Theorem 4.6]{KM3}. 

As an example, here we apply this Theorem to the case of $PU(4)$.

\begin{example} \rm
The generating function of $\mathcal{P}(\mathbf{k},\mathbf{y};A_3)$ has been given
in \cite[Example 2]{KM5}.   Therefore, 
by \cite[(3.8)]{KMT-Lie}, we have
\begin{multline}
  \mathcal{P}((2,2,2,2,2,2),\mathbf{0};Q;A_3)
  \\
  \begin{aligned}
    &=
    \frac{1}{4}(
    \mathcal{P}((2,2,2,2,2,2),\mathbf{0};A_3)
    -\mathcal{P}((2,2,2,2,2,2),\lambda_1^\vee;A_3)
    \\
    &\qquad+\mathcal{P}((2,2,2,2,2,2),\lambda_2^\vee;A_3)
    -\mathcal{P}((2,2,2,2,2,2),\lambda_3^\vee;A_3)
    )
    \\
    &=\frac{1103}{96888422400},
  \end{aligned}
\end{multline}
because $2\rho=3\alpha_1+4\alpha_2+3\alpha_3$ and hence
$\langle\mathbf{0},2\rho\rangle=0$,
$\langle\lambda_1^\vee,2\rho\rangle=3$,
$\langle\lambda_2^\vee,2\rho\rangle=4$,
$\langle\lambda_3^\vee,2\rho\rangle=3$.
Therefore by Theorem \ref{thm:W-Z}, we obtain
\begin{equation}
  \zeta_3((2,2,2,2,2,2),\mathbf{0};PU(4))=
  \frac{1103\pi^{12}}{145332633600}. \label{val-PU4}
\end{equation}
\end{example}

%%%%%%%%%%%%%%%%%%%%%%%%%%%%%%%%%%%%%%%%%%%
\section{Some preparatory lemmas}\label{sec-3}
%%%%%%%%%%%%%%%%%%%%%%%%%%%%%%%%%%%%%%%%%%%

In this section, we give explicit functional relations for double polylogarithms (see Lemma \ref{Lem-5-1} and Corollary \ref{L-MT}). By use of these results, we give certain functional relations for triple zeta-functions of weight lattices in the next section.

First we quote the following two lemmas. Let 
$\phi(s)=\sum_{n\geq 1}(-1)^n n^{-s}=\left(2^{1-s}-1\right)\zeta(s)$ and $\varepsilon_m=\{ 1+(-1)^m\}/2$ for $m\in \mathbb{Z}$. 
Let $\{ B_m(X)\}$ be the Bernoulli polynomials defined by 
$$\frac{te^{Xt}}{e^t-1}=\sum_{m=0}^\infty B_m(X)\frac{t^m}{m!}.$$

\begin{lemma}{\rm (\cite[Lemma 9.1]{KM5}, \cite[Lemma 2.1]{MNO})} \label{L-4-3} 
Let $c\in[0,2\pi) \subset \mathbb{R}$, and
$h:\mathbb{N}_{0} \to \mathbb{C}$ be a function (which may depend on $c$). 
Then, for $p \in \mathbb{N}$,
\begin{equation}
\begin{split}
& \sum_{j=0}^{p} \phi(p-j) \varepsilon_{p-j}\sum_{\xi=0}^{j} h(j-\xi)\frac{(i(c-\pi))^{\xi}}{\xi!} =-\frac{1}{2}\sum_{\xi=0}^{p}h(p-\xi)\frac{(2\pi i)^{\xi}}{\xi!}B_{\xi}\left(\left\{ \frac{c}{2\pi}\right\} \right), 
\end{split}
\label{eq-5-6-2}
\end{equation}
and 
\begin{align}
& \sum_{j=0}^{p} \phi(p-j) \varepsilon_{p-j}\sum_{\xi=0}^{j} h(j-\xi)\frac{(i\pi)^{\xi}}{\xi!} =\sum_{\nu=0}^{[p/2]}\zeta(2\nu)h(p-2\nu)-\frac{i\pi}{2}h(p-1), \label{eq-MNOT}
\end{align}
where $[x]$ is the integer part of $x\in \mathbb{R}$ and $\{ x\}=x-[x]$.
\end{lemma}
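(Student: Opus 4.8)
The plan is to prove \eqref{eq-5-6-2} first by a contour-integral / generating-function argument, and then to deduce \eqref{eq-MNOT} from it by specializing $c=\pi$. For the first identity, I would introduce the generating function $H(u)=\sum_{\xi\geq 0}h(\xi)u^\xi/\xi!$ (at least formally, or work degree by degree in $u$ so that only finitely many values $h(0),\dots,h(p)$ enter), and recognize the two sides of \eqref{eq-5-6-2} as the coefficient of $u^p$ in a product of $H(u)$ with a known Fourier-type expansion. Concretely, the inner sum $\sum_{\xi=0}^j h(j-\xi)(i(c-\pi))^\xi/\xi!$ is the $u^j$-coefficient of $H(u)e^{i(c-\pi)u}$, so the left-hand side is the $u^p$-coefficient of $H(u)e^{i(c-\pi)u}\sum_{m\geq 0}\phi(m)\varepsilon_m u^m/m!$. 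Since $\phi(m)\varepsilon_m$ is nonzero only for even $m$, and $\phi(2\nu)=(2^{1-2\nu}-1)\zeta(2\nu)$, the series $\sum_m \phi(m)\varepsilon_m u^m/m!$ sums in closed form; the classical evaluation $\sum_{\nu\geq 0}(2^{1-2\nu}-1)\zeta(2\nu)\,x^{2\nu}$ comes from the partial-fraction/cotangent expansions and equals, up to elementary factors, something like $-\tfrac12\big(x\cot x - x/\sin x\big)$ or a close variant — the exact shape is a routine computation I will not grind through here. Matching this against the right-hand side, which is the $u^p$-coefficient of $H(u)$ times $\sum_{\xi\geq 0}(2\pi i)^\xi B_\xi(\{c/2\pi\})u^\xi/\xi!=\dfrac{2\pi i u\, e^{cu i}}{e^{2\pi i u}-1}$ by the defining generating function of the Bernoulli polynomials, reduces \eqref{eq-5-6-2} to a single identity between two explicit meromorphic functions of $u$, independent of $h$.

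The key computation is therefore the identity of generating functions
\begin{equation*}
e^{i(c-\pi)u}\sum_{m\geq 0}\phi(m)\varepsilon_m\frac{u^m}{m!}
=-\frac12\cdot\frac{2\pi i u\, e^{icu}}{e^{2\pi i u}-1},
\end{equation*}
valid for $c\in[0,2\pi)$ in a neighborhood of $u=0$. After cancelling $e^{icu}$ from both sides this is a $c$-free statement: $\sum_{m}\phi(m)\varepsilon_m u^m/m!=-e^{i\pi u}\cdot \pi i u/(e^{2\pi i u}-1)$, which I would verify by writing the right-hand side as $\tfrac12\cdot\dfrac{\pi i u}{\sin(\pi u)}\cdot(\text{phase})$ and comparing with the Dirichlet-eta evaluation $\phi(2\nu)=(2^{1-2\nu}-1)\zeta(2\nu)$ together with the standard expansions $\pi u\cot(\pi u)=1-2\sum_{\nu\geq1}\zeta(2\nu)u^{2\nu}$ and $\pi u/\sin(\pi u)=1+2\sum_{\nu\geq1}(1-2^{1-2\nu})\zeta(2\nu)u^{2\nu}$. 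This is classical; the only care needed is bookkeeping of the $\varepsilon_m$ (which kills odd $m$) and of the sign $(2^{1-2\nu}-1)$ versus $(1-2^{1-2\nu})$. The constraint $c\in[0,2\pi)$ enters only through $\{c/2\pi\}=c/2\pi$, which is what makes the Bernoulli generating function apply on the nose.

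For \eqref{eq-MNOT} I would simply set $c=\pi$ in \eqref{eq-5-6-2}. The left-hand sides of the two identities then coincide verbatim (with $c-\pi=0$ the inner sum collapses but writing $i\pi$ in place of $i(c-\pi)$ means I am instead reading off \eqref{eq-MNOT} as stated, so more precisely I substitute into the generating-function form and evaluate the right side at the appropriate argument). With $c=\pi$ we have $\{c/2\pi\}=1/2$, and $B_\xi(1/2)=(2^{1-\xi}-1)B_\xi$ vanishes for odd $\xi\geq3$, equals $-\tfrac12$ for $\xi=1$, and for even $\xi=2\nu$ relates to $\zeta(2\nu)$ via $B_{2\nu}=(-1)^{\nu+1}2(2\nu)!\zeta(2\nu)/(2\pi)^{2\nu}$; pushing the factor $(2\pi i)^{2\nu}/(2\nu)!$ through turns $-\tfrac12(2\pi i)^{2\nu}B_{2\nu}(1/2)/(2\nu)!$ into exactly $\zeta(2\nu)$ (the $2^{1-2\nu}$ and the sign conspire correctly), while the $\xi=1$ term contributes $-\tfrac{i\pi}{2}h(p-1)$. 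Collecting these reproduces the right-hand side $\sum_{\nu=0}^{[p/2]}\zeta(2\nu)h(p-2\nu)-\tfrac{i\pi}{2}h(p-1)$ of \eqref{eq-MNOT}.

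The main obstacle is purely the closed-form evaluation of $\sum_{m\geq0}\phi(m)\varepsilon_m u^m/m!$ and getting every sign, factor of $2$, and factor of $i$ right so that it matches $-\tfrac12\cdot 2\pi i u\, e^{i(c-\pi)u}$-adjusted Bernoulli generating function; there is no conceptual difficulty, but the two sides involve $\zeta(2\nu)$ through slightly different normalizations ($\phi$ versus $B_{2\nu}(1/2)$ versus the raw $\zeta(2\nu)$) and it is easy to drop a sign. Once that one generating-function identity is pinned down, both \eqref{eq-5-6-2} and \eqref{eq-MNOT} follow by extracting the $u^p$-coefficient, the latter after the specialization $c=\pi$.
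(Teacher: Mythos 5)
The paper does not prove this lemma at all; it is quoted verbatim from \cite[Lemma 9.1]{KM5} and \cite[Lemma 2.1]{MNO}, so there is no internal proof to compare against. Your generating-function strategy for \eqref{eq-5-6-2} is essentially the standard one (it amounts to convolving the Fourier expansion \eqref{eq-4-4-2} at $\theta=c-\pi$ with \eqref{Ber-Lerch} against the sequence $h$), and it does work, but note a normalization slip: the inner sum $\sum_{\xi=0}^j h(j-\xi)a^\xi/\xi!$ carries no $1/(j-\xi)!$, so you must take $H(u)=\sum_\xi h(\xi)u^\xi$ and $\Phi(u)=\sum_m\phi(m)\varepsilon_m u^m$ as \emph{ordinary} generating functions; with your exponential normalizations the displayed identity is false. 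With the ordinary ones, $\Phi(u)=-\pi u/(2\sin\pi u)$ (since $\phi(0)=-\tfrac12$ and $\sum_{\nu\ge1}(2^{1-2\nu}-1)\zeta(2\nu)u^{2\nu}=-\tfrac12(\pi u/\sin\pi u-1)$), and indeed $e^{-i\pi u}\pi u/\sin\pi u=2\pi iu/(e^{2\pi iu}-1)$, which proves \eqref{eq-5-6-2}.

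The deduction of \eqref{eq-MNOT}, however, has a genuine error. Matching $(i\pi)^\xi$ with $(i(c-\pi))^\xi$ forces $c=2\pi$, not $c=\pi$; the correct move is the limit $c\to 2\pi^-$, where $B_\xi(\{c/2\pi\})\to B_\xi(1)$. At $c=\pi$ the left-hand side of \eqref{eq-5-6-2} degenerates (only $\xi=0$ survives) and the right-hand side involves $B_\xi(1/2)$, which yields only the tautology $\sum_j\phi(p-j)\varepsilon_{p-j}h(j)=\sum_\nu\phi(2\nu)h(p-2\nu)$: the even terms produce $\phi(2\nu)=(2^{1-2\nu}-1)\zeta(2\nu)$, not $\zeta(2\nu)$ (the factor $2^{1-2\nu}-1$ does \emph{not} cancel, contrary to your claim), and the $\xi=1$ term vanishes because $B_1(1/2)=0$, not $-\tfrac12$, so the crucial $-\tfrac{i\pi}{2}h(p-1)$ never appears. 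With $c\to2\pi^-$ everything does come out: $B_1(1)=\tfrac12$ gives $-\tfrac12\cdot(2\pi i)\cdot\tfrac12=-\tfrac{i\pi}{2}$, odd $B_\xi(1)$ vanish for $\xi\ge3$, and $-\tfrac12(2\pi i)^{2\nu}B_{2\nu}/(2\nu)!=\zeta(2\nu)$. So the right conclusion is reachable by your method, but the specialization you chose and the numerical claims supporting it are wrong as written.
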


The next lemma is the key to proving functional relations for zeta-functions. For $h \in \mathbb{N}$, let
\begin{align*}
& {\frak C}:=\left\{ C(l) \in \mathbb{C}\,|\, l \in \mathbb{Z},\ l \not=0 \right\}, \\
& {\frak D}:=\left\{ D(N;m;\eta) \in \mathbb{R}\,|\, N,m,\eta \in \mathbb{Z},\ N \not=0,\ m \geq 0,\ 1 \leq \eta \leq h\right\}, \\
& {\frak A}:=\{ a_\eta \in \mathbb{N}\,|\,1 \leq \eta \leq h\}
\end{align*}
be sets of numbers indexed by integers, and let
\begin{equation*}
\binom{x}{k}:=
\begin{cases}
\frac{x(x-1)\cdots(x-k+1)}{k!} & \ \ (k \in \mathbb{N}),\\
\ \ 1 & \ \ (k=0).
\end{cases}
\end{equation*}

\begin{lemma}{\rm \cite[ Lemma 6.2]{KMTJC}} \label{L-4-2} With the above notation, we assume that the infinite series appearing in 
\begin{equation}
\begin{split}
\sum_{N \in \mathbb{Z} \atop N \not=0}(-1)^{N}C(N)e^{iN\theta} & -2\sum_{\eta=1}^{h}\sum_{k=0}^{a_\eta}\phi(a_\eta-k)\varepsilon_{a_\eta-k} \\
& \ \ \times \sum_{\xi=0}^{k}\left\{ \sum_{N \in \mathbb{Z} \atop N \not=0}(-1)^N D(N;k-\xi;\eta)e^{iN\theta}\right\}\frac{(i\theta)^\xi}{\xi!} 
\end{split}
 \label{eq-4-3}
\end{equation}
are absolutely convergent for $\theta \in [-\pi,\pi]$, and that (\ref{eq-4-3}) is a constant function for $\theta \in (-\pi,\pi)$. Then, for $d \in \mathbb{N}$, 
\begin{align}
& \sum_{N \in \mathbb{Z} \atop N \not= 0}\frac{(-1)^{N}C(N)e^{iN\theta}}{N^d} 
=2\sum_{\eta=1}^{h}\sum_{k=0}^{a_\eta}\phi(a_\eta-k)\varepsilon_{a_\eta-k} 
\label{eq-4-4} \\
& \ \ \ \ \ \times \sum_{\xi=0}^{k}\bigg\{ \sum_{\omega=0}^{k-\xi}\binom{\omega+d-1}{\omega}(-1)^{\omega} \sum_{m \in \mathbb{Z} \atop m \not= 0}\frac{(-1)^m D(m;k-\xi-\omega;\eta)e^{im\theta}}{m^{d+\omega}}\bigg\}\frac{(i\theta)^\xi}{\xi!} \notag \\
& \ -2\sum_{k=0}^{d}\phi(d-k)\varepsilon_{d-k} \sum_{\xi=0}^{k}\bigg\{ \sum_{\eta=1}^{h} \sum_{\omega=0}^{a_\eta-1}\binom{\omega+k-\xi}{\omega}(-1)^{\omega}\notag \\
& \hspace{1in} \times \sum_{m \in \mathbb{Z} \atop m \not=0}\frac{D(m;a_\eta-1-\omega;\eta)}{m^{k-\xi+\omega+1}}\bigg\}\frac{(i\theta)^\xi}{\xi!} \notag
\end{align}
holds for $\theta \in [-\pi,\pi]$, where the infinite series appearing on both 
sides of (\ref{eq-4-4}) are absolutely convergent for $\theta \in [-\pi,\pi]$.
\end{lemma}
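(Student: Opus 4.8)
The plan is to deduce \eqref{eq-4-4} from the hypothesis on \eqref{eq-4-3} by a single integration (or equivalently a term-by-term antidifferentiation in the Fourier variable $\theta$), applied $d$ times, followed by a bookkeeping of the resulting double series. First I would set $F(\theta)$ to be the function \eqref{eq-4-3}, which by assumption is a constant, say $F(\theta)=\gamma$ for $\theta\in(-\pi,\pi)$. The starting point is the elementary Fourier-expansion identity: for $N\in\mathbb{Z}$, $N\neq 0$, one has $(-1)^N e^{iN\theta}/N^d$ expressible via repeated integration of $(-1)^N e^{iN\theta}$ against the kernel built from the Bernoulli polynomials $B_\xi(\{\theta/2\pi\})$ — this is precisely the content of Lemma \ref{L-4-3}, equation \eqref{eq-5-6-2} with $c$ replaced by $\theta+\pi$ (so that $\{c/2\pi\}$ and the factor $(i(c-\pi))^\xi$ match the $(i\theta)^\xi/\xi!$ appearing in \eqref{eq-4-4}). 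Thus the mechanism is: multiply the constant relation $F(\theta)=\gamma$ by the kernel that converts a Fourier series into its $d$-fold antiderivative, and integrate over $[-\pi,\pi]$.

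The key steps, in order, are as follows. (i) Rewrite the hypothesis: $\sum_{N\neq 0}(-1)^N C(N)e^{iN\theta}$ equals $\gamma$ plus the $h$-fold double sum involving $D(N;k-\xi;\eta)$ and the polynomial factors $(i\theta)^\xi/\xi!$. (ii) Divide (formally) by $N^d$: the left side becomes $\sum_{N\neq 0}(-1)^N C(N)e^{iN\theta}/N^d$; on the right, the term $D(N;m;\eta)e^{iN\theta}$ must be divided by $N^d$, but it is multiplied by $(i\theta)^\xi/\xi!$, and one uses the binomial expansion $1/N^d$ in terms of $1/(N)^{d+\omega}$ weighted against lower powers of $\theta$ — concretely, the identity $\int_0^\theta t^\xi e^{iNt}\,dt$ expanded by parts produces exactly the factor $\binom{\omega+d-1}{\omega}(-1)^\omega$ shifting $D(N;k-\xi;\eta)$ to $D(m;k-\xi-\omega;\eta)$ with denominator $m^{d+\omega}$. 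This accounts for the first block on the right of \eqref{eq-4-4}. (iii) The constant $\gamma$, when passed through the same antidifferentiation kernel, produces — via \eqref{eq-5-6-2} applied with $h(\cdot)$ chosen to encode the coefficients $D(m;a_\eta-1-\omega;\eta)/m^{\,\cdot}$ — the second block, the one with $\phi(d-k)\varepsilon_{d-k}$ and $\binom{\omega+k-\xi}{\omega}$; here the value of $\gamma$ is itself computed by evaluating \eqref{eq-4-3} at a convenient point (e.g. $\theta=0$ or by integrating over a period), which brings in $\phi(a_\eta-k)\varepsilon_{a_\eta-k}$ and the $1/m^{k-\xi+\omega+1}$ denominators. (iv) Finally, verify absolute convergence on the closed interval $[-\pi,\pi]$: since each division by $N$ improves convergence and the original series in \eqref{eq-4-3} converge absolutely on $[-\pi,\pi]$ by hypothesis, all rearrangements are justified and the endpoint values are attained.

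The main obstacle I anticipate is step (iii): tracking the constant $\gamma$ correctly through the $d$-fold integration and matching it to the precise combinatorial shape of the second sum in \eqref{eq-4-4}. One must choose the auxiliary function $h$ in Lemma \ref{L-4-3} as $h(j)=\sum_{\eta}\sum_{\omega}\binom{\omega+\cdots}{\omega}(-1)^\omega \sum_{m\neq 0} D(m;a_\eta-1-\omega;\eta)m^{-(j-\cdots)}$ (with indices arranged so that the telescoping in \eqref{eq-5-6-2} reproduces the claimed expression), and then check that the Bernoulli-polynomial right-hand side of \eqref{eq-5-6-2} collapses to give exactly the $(i\theta)^\xi/\xi!$-weighted terms with $\phi(d-k)\varepsilon_{d-k}$. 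The interchange of the (finite) $\eta,k,\xi,\omega$ summations with the (infinite, absolutely convergent) $m$- and $N$-summations is routine once convergence is in hand, so the real work is purely the index arithmetic of steps (ii) and (iii); the rest follows the template already used in \cite{KMTJC} for the even-integer case and in \cite{KM5}.
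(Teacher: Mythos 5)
First, a point of comparison: the paper itself offers no proof of Lemma \ref{L-4-2} --- it is quoted verbatim from \cite[Lemma 6.2]{KMTJC} --- so there is no in-paper argument to measure your proposal against. Judged against the proof in that reference, your overall strategy is the right one: start from the hypothesis that \eqref{eq-4-3} is constant, antidifferentiate term by term $d$ times, and let integration by parts on the products $e^{iNt}(it)^{\xi}/\xi!$ generate the binomial coefficients $\binom{\omega+d-1}{\omega}$ and the index shifts in the first block of \eqref{eq-4-4}. Your step (ii) is essentially correct, and the convergence remarks in (iv) are routine as you say.

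The gap is exactly where you place it, in step (iii), and the details you sketch there would not work as written. The second block of \eqref{eq-4-4} carries the signature $2\sum_{k}\phi(d-k)\varepsilon_{d-k}(\cdots)(i\theta)^{\xi}/\xi!$, which is the expansion \eqref{eq-4-4-2} of $\sum_{N\neq 0}(-1)^{N}e^{iN\theta}/N^{d}$, not the Bernoulli-polynomial identity \eqref{eq-5-6-2} of Lemma \ref{L-4-3}; the latter is used in this paper only to evaluate such expansions at special values of $\theta$. What actually produces the second block is the collection of constants of integration accumulated at each of the $d$ antidifferentiation steps; each such constant must be re-expanded against $\sum_{N\neq0}(-1)^{N}e^{iN\theta}/N^{j}$ via \eqref{eq-4-4-2}, and the resulting Cauchy products in the polynomial degree reassembled into the $h(k-\xi)$ shape you describe. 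Moreover, those constants cannot be obtained by setting $\theta=0$: the inner sums in the second block read $\sum_{m}D(m;a_\eta-1-\omega;\eta)/m^{k-\xi+\omega+1}$ with neither $(-1)^{m}$ nor $e^{im\theta}$, which signals that they arise from integrating \eqref{eq-4-3} over the full period $[-\pi,\pi]$ (orthogonality kills the $N\neq0$ exponentials, and the boundary values $e^{\pm im\pi}=(-1)^{m}$ cancel the sign), whereas evaluation at $\theta=0$ would leave the $(-1)^{m}$ in place and give the wrong constants. Until this bookkeeping is carried out --- it is the entire content of the induction on $d$ in \cite[Lemma 6.2]{KMTJC} --- the proposal remains a plausible plan rather than a proof.
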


For $p \in \mathbb{N}$, 
it is known that (see, for example, \cite[(4.31),\,(4.32)]{KMTJC})
\begin{align}
& \lim_{L \to \infty}\sum_{-L\leq l\leq L \atop l\not=0} \frac{(-1)^{l}e^{il\theta}}{l^{p}}=2\sum_{j=0}^{p}\ \phi(p-j)\varepsilon_{p-j}\frac{(i\theta)^{j}}{j!}\quad (\theta \in (-\pi,\pi)). \label{eq-4-4-2} 
\end{align}
Note that the left-hand side is uniformly convergent for $\theta \in (-\pi,\pi)$ (see \cite[$\S$ 3.35]{WW}), and is also absolutely convergent for $p\geq 2$. We prove the following lemma. Note that the case when $p$ and $q$ are even has been already proved in \cite[(7.55)]{KMTJC}.

\begin{lemma}\label{Lem-5-1} 
For $p\in \mathbb{N}$, $s \in \mathbb{R}$ with $s>1$ and $x\in \mathbb{C}$ with $|x|= 1$,
\begin{equation}
\begin{split}
& \sum_{l\not=0,\,m\geq 1 \atop l+m\not=0} \frac{(-1)^{l+m}x^m e^{i(l+m)\theta}}{l^{p}m^{s}(l+m)^{q}} \\
& \ \ -2\sum_{j=0}^{p}\ \phi(p-j)\varepsilon_{p-j} \sum_{\xi=0}^{j}\binom{q-1+j-\xi}{q-1}(-1)^{j-\xi}\sum_{m=1}^\infty \frac{(-1)^{m}x^m e^{im\theta}}{m^{s+q+j-\xi}}\frac{(i\theta)^{\xi}}{\xi!} \\
& \ \ +2\sum_{j=0}^{q}\ \phi(q-j)\varepsilon_{q-j} \sum_{\xi=0}^{j}\binom{p-1+j-\xi}{p-1}(-1)^{p-1}\sum_{m=1}^\infty \frac{x^m }{m^{s+p+j-\xi}}\frac{(i\theta)^{\xi}}{\xi!}=0 
\end{split}
\label{eq-5-6}
\end{equation}
for $\theta \in [-\pi,\pi]$. 
\end{lemma}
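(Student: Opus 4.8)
The plan is to deduce \eqref{eq-5-6} from Lemma \ref{L-4-2} by an appropriate choice of the data $\mathfrak{C}$, $\mathfrak{D}$, $\mathfrak{A}$, and the relevant trigonometric identity, exactly in the spirit of the derivation of \cite[(7.55)]{KMTJC} but now keeping one variable free as a general complex number $s>1$ (and an auxiliary parameter $x$ with $|x|=1$) instead of specializing to an even integer. First I would fix $m\geq 1$ and treat the sum over $l$: applying \eqref{eq-4-4-2} to $\sum_{l\neq 0,\,l\neq -m}(-1)^l e^{il\theta}/l^p$ after the standard partial-fraction-type manipulation that rewrites $1/(l^p(l+m)^q)$ — multiply through by $x^m e^{im\theta} m^{-s}$ and sum over $m$. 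Concretely, set $h=1$, $a_1=p$ (or a companion run with $a_1=q$), let $C(N)$ absorb the $m$-sum giving the combination $\sum_m (-1)^m x^m e^{im\theta} m^{-s-\cdots}$ that appears in the first correction term, and let $D(N;\cdot;\cdot)$ encode the corresponding shifted sums. One checks that the bracketed expression \eqref{eq-4-3} is, for $\theta\in(-\pi,\pi)$, a constant (in fact it should telescope to something $\theta$-independent, typically $0$ or a value at the endpoint), which is the hypothesis needed to invoke the lemma; absolute convergence for $\theta\in[-\pi,\pi]$ is guaranteed because $s>1$ makes the $m$-sums converge absolutely and the $l$-sums are handled by the $p,q\geq 1$ bound together with \cite[\S3.35]{WW}.

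Next I would run Lemma \ref{L-4-2} with $d$ chosen to be $q$ (the exponent on $l+m$), so that the factor $\binom{\omega+d-1}{\omega}$ produces the binomial coefficient $\binom{q-1+j-\xi}{q-1}$ in the first line of \eqref{eq-5-6}, and the second (double) sum in \eqref{eq-4-4}, with its $\binom{\omega+k-\xi}{\omega}$ and the exponent bookkeeping, produces the third line with $\binom{p-1+j-\xi}{p-1}$ and the sign $(-1)^{p-1}$. The symmetry between the two correction terms in \eqref{eq-5-6} reflects the $l \leftrightarrow l+m$ (equivalently $p\leftrightarrow q$) symmetry of the original double sum, so I expect to either run the lemma once and symmetrize, or run it twice and add. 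Throughout, the only place the hypothesis "$s$ real, $s>1$" (rather than "$s$ an even integer") enters is in justifying the interchange of the $l$-summation limit with the $m$-summation and with $\theta$-continuity; since all the $m$-series are of the form $\sum x^m e^{im\theta} m^{-\sigma}$ with $\sigma=s+\text{(nonnegative integer)}>1$, Weierstrass $M$-test gives uniform (indeed absolute) convergence in $\theta\in[-\pi,\pi]$, so no delicate Abel-summation argument is needed for that variable — this is precisely why the restriction can be relaxed here.

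The main obstacle, I expect, is the verification that the auxiliary function \eqref{eq-4-3} arising from this choice of data is genuinely a constant on $(-\pi,\pi)$ — i.e., setting up the bookkeeping so that the "input" identity to Lemma \ref{L-4-2} is correct. This amounts to checking the base case $d=0$ (or the raw trigonometric identity before dividing by $N^d$), which one does by combining \eqref{eq-4-4-2} with the elementary expansion of $1/(l^p(l+m)^q)$ into partial fractions in $l$ and recognizing the resulting sums; here one must be careful about the $l=-m$ exclusion and about whether the "constant" picked up is $0$ or a boundary contribution (this is where Lemma \ref{L-4-3}, via $B_\xi(\{c/2\pi\})$, typically gets used to evaluate the constant). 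A secondary, purely clerical difficulty is matching the index shifts: Lemma \ref{L-4-2} is stated with a triple sum over $\eta,k,\xi$ plus an inner $\omega$-sum, and one has to track carefully that after setting $h=1$ and $a_1=p$ the variable $\omega$ reindexes as $j-\xi$ in \eqref{eq-5-6}. Once the input identity is confirmed, the rest is a direct substitution into \eqref{eq-4-4} followed by rearranging the finite sums, with the absolute convergence claim for $\theta\in[-\pi,\pi]$ on both sides of \eqref{eq-5-6} inherited verbatim from the corresponding clause in Lemma \ref{L-4-2}.
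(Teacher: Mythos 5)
Your plan for $p\ge 2$ is essentially the paper's proof: one multiplies the identity \eqref{eq-4-4-2} by $\sum_m(-1)^m x^m e^{im\theta}m^{-s}$, observes that the excluded diagonal $l=-m$ contributes the $\theta$-independent constant $(-1)^{p+1}\sum_m x^m m^{-s-p}$, and then feeds the resulting identity into Lemma \ref{L-4-2} with $h=1$, $a_1=p$, $C(N)=\sum_{l+m=N}x^m l^{-p}m^{-s}$, $D(N;0;1)=x^N N^{-s}$ and $d=q$; the two correction terms in \eqref{eq-5-6} both come out of the single application of \eqref{eq-4-4}, so no separate symmetrization step is needed. Your identification of the "constant" as the $l=-m$ contribution is exactly right.

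The genuine gap is the case $p=1$, which your convergence argument does not cover and which the paper must treat separately. Your claim that absolute convergence on $[-\pi,\pi]$ is "guaranteed because $s>1$ makes the $m$-sums converge absolutely and the $l$-sums are handled by the $p,q\geq 1$ bound" fails when $p=1$: the series $\sum_{l\neq 0}(-1)^l e^{il\theta}/l$ is only conditionally convergent (the citation to \cite[\S 3.35]{WW} gives uniform convergence on compact subsets of $(-\pi,\pi)$, not absolute convergence), so the double series in \eqref{eq-4-4-3}--\eqref{eq-4-4-4} cannot be rearranged freely and, more importantly, the hypothesis of Lemma \ref{L-4-2} — that the series in \eqref{eq-4-3} converge \emph{absolutely} on $[-\pi,\pi]$ — is violated. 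The paper circumvents this by first establishing \eqref{eq-5-6} for $p=2$, then substituting $x\mapsto -xe^{i\theta}$ in that identity and differentiating term by term in $\theta$; after a reindexing of the inner $\xi$-sum (using that the $\xi=j+1$ term vanishes), the derivative divided by $i$ reproduces \eqref{eq-5-6} with $p=1$. Without this (or some equivalent device such as Abel summation in $l$), your argument proves the lemma only for $p\ge 2$.
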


\begin{proof}
First we assume $p\geq 2$. Then, for $\theta \in (-\pi,\pi)$, it follows from \eqref{eq-4-4-2} that
\begin{align}
& \left( \sum_{l\in \mathbb{Z} \atop l\not=0} \frac{(-1)^{l}e^{il\theta}}{l^{p}}-2\sum_{j=0}^{p}\ \phi(p-j)\varepsilon_{p-j}\frac{(i\theta)^{j}}{j}\right)\sum_{m=1}^\infty \frac{(-1)^{m}x^m e^{im\theta}}{m^s}=0, \label{eq-4-4-3} 
\end{align}
where the left-hand side is absolutely and uniformly convergent for $\theta \in (-\pi,\pi)$. Therefore we have
\begin{equation}
\begin{split}
& \sum_{l\in \mathbb{Z},\ l\not=0 \atop{m\geq 1 \atop l+m\not=0}} \frac{(-1)^{l+m}x^m e^{i(l+m)\theta}}{l^{p}m^{s}}-2\sum_{j=0}^{p}\ \phi(p-j)\varepsilon_{p-j}\left\{ \sum_{m=1}^\infty \frac{(-1)^{m}x^m e^{im\theta}}{m^s}\right\} \frac{(i\theta)^{j}}{j!}\\
& \ \ \ \ =(-1)^{p+1}\sum_{m=1}^\infty \frac{x^m}{m^{s+p}} 
\end{split}
\label{eq-4-4-4} 
\end{equation}
for $\theta \in (-\pi,\pi)$. Now we use Lemma \ref{L-4-2} with $h=1$, $a_1=p$, 
$$C(N)=\sum_{l\not=0,\,m\geq 1 \atop l+m=N} \,\frac{x^m}{l^{p}m^{s}}\ (N \in \mathbb{Z},\,N\not=0)$$
and $D(N;\mu;1)=x^N N^{-s}$ (if $\mu=0$ and $N \geq 1$), or $=0$ (otherwise).
Under these settings, we see that the left-hand side of (\ref{eq-4-4-4}) is of the form (\ref{eq-4-3}). Furthermore the right-hand side of (\ref{eq-4-4-4}) is a constant 
as a function in $\theta$. Therefore we can apply Lemma \ref{L-4-2} with $d=q\in \mathbb{N}$. Then (\ref{eq-4-4}) gives \eqref{eq-5-6} for $p\geq 2$. 

Next we prove the case $p=1$. As we proved above, \eqref{eq-5-6} in the case $p=2$ holds. Replacing $x$ by $-xe^{i\theta}$ in this case, we have
\begin{equation}
\begin{split}
& \sum_{l\not=0,\,m\geq 1 \atop l+m\not=0} \frac{(-1)^{l}x^m e^{il\theta}}{l^{2}m^{s}(l+m)^{q}} \\
& \ \ -2\sum_{j=0}^{2}\ \phi(2-j)\varepsilon_{2-j} \sum_{\xi=0}^{j}\binom{q-1+j-\xi}{q-1}(-1)^{j-\xi}\sum_{m=1}^\infty \frac{x^m}{m^{s+q+j-\xi}}\frac{(i\theta)^{\xi}}{\xi!} \\
& \ \ +2\sum_{j=0}^{q}\ \phi(q-j)\varepsilon_{q-j} \sum_{\xi=0}^{j}\binom{1+j-\xi}{1}(-1)^{1}\sum_{m=1}^\infty \frac{(-1)^mx^m e^{-im\theta}}{m^{s+2+j-\xi}}\frac{(i\theta)^{\xi}}{\xi!}=0 
\end{split}
\label{eq-5-62}
\end{equation}
for $\theta \in [-\pi,\pi]$. We denote the first, the second and the third term on the left hand side of \eqref{eq-5-62} by $I_1(\theta)$, $I_2(\theta)$ and $I_3(\theta)$, respectively. 
We differentiate these terms in $\theta$. We can easily compute $I_1'(\theta)$ and $I_2'(\theta)$. As for $I_3'(\theta)$, we have
\begin{align*}
I_3'(\theta)& = 2\sum_{j=0}^q\phi(q-j)\varepsilon_{q-j}\bigg\{-i\sum_{\xi=0}^{j}(1+j-\xi)(-1)\sum_{m=1}^\infty \frac{(-1)^mx^m e^{-im\theta}}{m^{s+1+j-\xi}}\frac{(i\theta)^{\xi}}{\xi!}\\
& \qquad +i\sum_{\xi=1}^{j}(1+j-\xi)(-1)\sum_{m=1}^\infty \frac{(-1)^mx^m e^{-im\theta}}{m^{s+2+j-\xi}}\frac{(i\theta)^{\xi-1}}{(\xi-1)!}\bigg\}.
\end{align*}
Note that as for the second member in the curly brackets on the right-hand side, $\xi$ may also run from $1$ to $j+1$ because $1+j-(j+1)=0$ in the summand. Hence, by replacing $\xi-1$ by $\xi$, we have 
\begin{align*}
I_3'(\theta)& = 2i\sum_{j=0}^q\phi(q-j)\varepsilon_{q-j}\sum_{\xi=0}^{j}\sum_{m=1}^\infty \frac{(-1)^mx^m e^{-im\theta}}{m^{s+1+j-\xi}}\frac{(i\theta)^{\xi}}{\xi!}.
\end{align*}
Thus, we see that $(I_1'(\theta)+I_2'(\theta)+I_3'(\theta))/i$,
replacing $x$ by $-xe^{i\theta}$, 
gives \eqref{eq-5-6} in the case $p=1$. This completes the proof.
\end{proof}

The following special cases of \eqref{eq-5-6} will be necessary in the next section. 

\begin{corollary} \label{L-MT}
For $p,q \in \mathbb{N}$, $s>1$ and $x,y\in \mathbb{C}$ with $|x|\leq 1$ and $|y|\leq 1$, let 
\begin{equation}
\mathfrak{T}(p,s,q;x,y)=\sum_{l\not=0,\,m\geq 1 \atop l+m\not=0} \frac{x^l y^m}{l^{p}m^{s}(l+m)^{q}}.
\label{def-T}
\end{equation}
Then
\begin{align}
\mathfrak{T}(p,s,q;1,1) & =2(-1)^p \sum_{k=0}^{[p/2]}\ \zeta(2k)\binom{p+q-1-2k}{q-1}\zeta(s+p+q-2k) \label{eq-MT-00}\\
& \ \ +2(-1)^p \sum_{k=0}^{[q/2]}\ \zeta(2k)\binom{p+q-1-2k}{p-1}\zeta(s+p+q-2k), \notag\\
\mathfrak{T}(p,s,q;-1,1) & =2(-1)^p \sum_{k=0}^{[p/2]}\ \phi(2k)\binom{p+q-1-2k}{q-1}\zeta(s+p+q-2k) \label{eq-MT-l}\\
& \ \ +2(-1)^p \sum_{k=0}^{[q/2]}\ \phi(2k)\binom{p+q-1-2k}{p-1}\phi(s+p+q-2k), \notag\\
\mathfrak{T}(p,s,q;1,-1) & =2(-1)^p \sum_{k=0}^{[p/2]}\ \zeta(2k)\binom{p+q-1-2k}{q-1}\phi(s+p+q-2k) \label{eq-MT-m}\\
& \ \ +2(-1)^p \sum_{k=0}^{[q/2]}\ \zeta(2k)\binom{p+q-1-2k}{p-1}\phi(s+p+q-2k), \notag\\
\mathfrak{T}(p,s,q;-1,-1) & =2(-1)^p \sum_{k=0}^{[p/2]}\ \phi(2k)\binom{p+q-1-2k}{q-1}\phi(s+p+q-2k) \label{eq-MT-lm}\\
& \ \ +2(-1)^p \sum_{k=0}^{[q/2]}\ \phi(2k)\binom{p+q-1-2k}{p-1}\zeta(s+p+q-2k). \notag
\end{align}
\end{corollary}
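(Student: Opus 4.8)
The plan is to obtain all four evaluations as special cases of Lemma~\ref{Lem-5-1}, exploiting the fact that in \eqref{eq-5-6} the parameter $x$ ranges over all unimodular complex numbers and the parameter $\theta$ ranges over $[-\pi,\pi]$; the two remaining signs in $\mathfrak{T}(p,s,q;x,y)$ will come from the choice $x\in\{1,-1\}$ together with evaluation at the endpoint $\theta=0$ or $\theta=\pi$. First I would observe that the left member of the defining triple sum \eqref{def-T} differs from the first summand of \eqref{eq-5-6} only by the substitution $e^{i(l+m)\theta}\mapsto$ (a root of unity) and $(-1)^{l+m}x^m\mapsto x^l y^m$: indeed, setting $\theta=0$ in \eqref{eq-5-6} and taking $x=1$ gives, after collecting, a relation for $\sum (-1)^{l+m} l^{-p}m^{-s}(l+m)^{-q}$, and then replacing $x$ by $-1$, or replacing $\theta$ by $\pi$ (so that $e^{im\theta}=(-1)^m$ and $e^{i(l+m)\theta}=(-1)^{l+m}$), toggles the sign on $m$ and on $l$ independently. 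Concretely, the four target signs $(\epsilon_l,\epsilon_m)\in\{\pm1\}^2$ are realized as follows: take $\theta=0$ with the built-in $(-1)^{l+m}$ absorbed by choosing $x=\pm1$ to flip $\epsilon_m$, and use the endpoint $\theta=\pi$ versus $\theta=0$ to flip $\epsilon_l$ (since $(-1)^l e^{il\pi}=1$).

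The key computational step is the reduction of the $\theta$-dependent correction terms in \eqref{eq-5-6} at these special values of $\theta$. At $\theta=0$ every factor $(i\theta)^\xi/\xi!$ kills all terms with $\xi\ge1$, leaving only $\xi=0$; then the inner sums over $m$ become, up to sign, values $\zeta(s+p+q-j)$ or $\phi(s+p+q-j)$, and the outer sum $\sum_{j=0}^p \phi(p-j)\varepsilon_{p-j}(\cdots)$ collapses to a sum over even indices because $\varepsilon_{p-j}$ forces $p-j$ even, so $\phi(p-j)=\phi(2k)$ after reindexing $j=p-2k$; and $\phi(0)=-1/2$, $\phi(2k)=(2^{1-2k}-1)\zeta(2k)$ for $k\ge1$ (note $\phi(2k)\ne\zeta(2k)$ in general, so one must be careful which of $\zeta$ or $\phi$ appears — this is exactly what distinguishes \eqref{eq-MT-00} from \eqref{eq-MT-l}). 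At $\theta=\pi$ one instead uses \eqref{eq-MNOT} of Lemma~\ref{L-4-3} (with the right choice of $h$) to evaluate $\sum_{j}\phi(p-j)\varepsilon_{p-j}(i\pi)^\xi/\xi!$-type expressions, which is precisely the mechanism converting the polynomial-in-$\theta$ correction into $\sum_{\nu}\zeta(2\nu)(\cdots)$; this is where the genuine $\zeta(2k)$ (as opposed to $\phi(2k)$) coefficients in \eqref{eq-MT-00} and \eqref{eq-MT-m} come from.

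I would organize the argument by first doing the case $\mathfrak{T}(p,s,q;1,1)$ in full detail: start from \eqref{eq-5-6} with $x=1$, set $\theta=\pi$, apply \eqref{eq-MNOT} to both the $p$-indexed and the $q$-indexed correction sums, and simplify the binomial coefficients $\binom{q-1+j-\xi}{q-1}$ at $\xi=$ the surviving values — with $\theta=\pi$ one does not get $\xi=0$ only, so one must track the polynomial identity carefully, which is where \eqref{eq-MNOT} does its work by folding $\sum_\xi h(p-\xi)(i\pi)^\xi/\xi!$ into $\sum_\nu \zeta(2\nu)h(p-2\nu)-\tfrac{i\pi}{2}h(p-1)$; the spurious $-\tfrac{i\pi}{2}h(p-1)$ terms from the two correction sums must cancel, which they do by the $l\leftrightarrow m$ symmetry visible in the final formula. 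Then the remaining three cases follow by the substitutions $x\mapsto -x$ and/or by using $\theta=0$ in place of $\theta=\pi$ (using \eqref{eq-4-4-2}-style expansions or simply the vanishing of higher $\xi$), each time re-reading which inner Dirichlet series is alternating (hence $\phi$) versus not (hence $\zeta$): swapping $\theta=\pi\leftrightarrow\theta=0$ toggles between $\zeta(2k)$ and $\phi(2k)$ in the outer coefficients, while swapping $x=1\leftrightarrow x=-1$ toggles between $\zeta$ and $\phi$ in the inner arguments $s+p+q-2k$. The main obstacle I anticipate is purely bookkeeping: matching the index ranges after reindexing $j\mapsto p-2k$ (respectively $j\mapsto q-2k$), checking that the binomial coefficients land on $\binom{p+q-1-2k}{q-1}$ and $\binom{p+q-1-2k}{p-1}$ exactly, and verifying the cancellation of the $h(p-1)$ and $h(q-1)$ artifact terms between the two correction sums; no new idea beyond Lemmas~\ref{L-4-3} and~\ref{Lem-5-1} should be needed, and the case $p=1$ or $q=1$ is already covered since Lemma~\ref{Lem-5-1} was proved for all $p\in\mathbb{N}$.
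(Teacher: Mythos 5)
Your proposal is correct and follows essentially the same route as the paper: substitute the four pairs $(x,\theta)\in\{1,-1\}\times\{0,\pi\}$ into \eqref{eq-5-6}, noting that at $\theta=0$ only the $\xi=0$ terms survive (yielding the $\phi(2k)$ coefficients) while at $\theta=\pi$ one applies \eqref{eq-MNOT} of Lemma~\ref{L-4-3} (yielding the $\zeta(2k)$ coefficients), with the imaginary artifact terms cancelling via $\binom{p+q-2}{q-1}=\binom{p+q-2}{p-1}$. The only detail to tidy is the sign bookkeeping: switching $\theta=0$ to $\theta=\pi$ at fixed $x$ flips both $\epsilon_l$ and $\epsilon_m$ (since the exponential carries $l+m$), not $\epsilon_l$ alone, but the four $(x,\theta)$ choices still realize all four sign patterns exactly as in the paper.
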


\begin{proof}
We can directly obtain \eqref{eq-MT-lm} and \eqref{eq-MT-l} by letting 
$(x,\theta)=(1,0)$, $(-1,0)$, respectively in \eqref{eq-5-6}.   (Since
$\theta=0$, all the terms corresponding to $\xi\geq 1$ vanish on the right-hand side
of \eqref{eq-5-6}.)
As for \eqref{eq-MT-m} and \eqref{eq-MT-00}, we let 
$(x,\theta)=(-1,\pi)$, $(1,\pi)$, respectively in \eqref{eq-5-6}
and use Lemma \ref{L-4-3}. 
\end{proof}

\begin{remark} \rm 
The results of the above corollary are not new.   The formula
\eqref{eq-MT-00} was first proved in \cite{TsC}, and then, by a different method,
Nakamura \cite[Theorem 3.1]{Naka} has shown all of the above
(see also a survey given in \cite[Section 3]{KMTJC}).
\end{remark}

\ 

%%%%%%%%%%%%%%%%%%%%%%%%%%%%%%%%
\section{The zeta-function of $SU(4)$}\label{sec-4}
%%%%%%%%%%%%%%%%%%%%%%%%%%%%%%%%

Now we start to prove functional relations for zeta-functions for lattices of type $A_3$.
%First we prove certain functional relations for $\zeta_3 ({\bf s},\lambda_2^\vee;SU(4))=\zeta_3 ({\bf s},\lambda_2^\vee;P;A_3)$ and also those for $\zeta_3 ({\bf s};SO(6))=\zeta_3 ({\bf s},\bs{0};L_1;A_3)$, 
%where $L_1$ is a lattice with $P\supset L_1\supset Q$ and $[L_1:Q]=2$ (see \eqref{zeta-A3-L}). 
We use the same technique as in our previous paper \cite[Section 7]{KMTJC}. Hence the details of their proofs will be omitted. 

In this section we study the zeta-function associated with the group $SU(4)$.
Our starting point is similar to \eqref{eq-4-4-3}, or 
\cite[Equation\,(7.58)]{KMTJC}.    We begin by considering
\begin{align}
& \left( \sum_{l\in \mathbb{Z} \atop l\not=0} \frac{(-1)^{l}e^{il\theta}}{l^{p}}-2\sum_{j=0}^{p}\ \phi(p-j)\varepsilon_{p-j}\frac{(i\theta)^{j}}{j}\right)\sum_{m\in \mathbb{Z},\,m\not=0 \atop {n\geq 1 \atop m+n\not=0}} \frac{(-1)^{m+n}x^m y^n e^{i(m+n)\theta}}{m^{q}n^s(m+n)^{b}}=0 \label{7-1} 
\end{align}
for $\theta \in [-\pi,\pi]$, where $p,q,b \in \mathbb{N}$ with 
$p\geq 2$, $s \in \mathbb{R}$ with $s>1$ and $x,y\in \mathbb{C}$ with $|x|=|y|=1$.    
Then, by the (almost) same argument 
as in \cite[pp.\,158-160]{KMTJC}, we obtain 
\begin{align}
& \sum_{l,m\not=0,\;n\geq 1\atop {l+m\not=0,\;m+n\not=0 \atop l+m+n\not=0}} \frac{(-1)^{l+m}x^my^n e^{i(l+m)\theta}}{l^{p}m^{q}n^{s}(l+m)^{a}(m+n)^{b}(l+m+n)^{c}} \label{equ-6-1}\\
& \quad =2\sum_{k=0}^{p}\ \phi(p-k)\varepsilon_{p-k}\sum_{\xi=0}^{k}\sum_{\omega=0}^{k-\xi}\binom{\omega+a-1}{\omega}(-1)^\omega \binom{k-\xi-\omega+c-1}{k-\xi-\omega}\notag\\
& \quad \quad \times (-1)^{k-\xi-\omega} \sum_{m\not=0 \atop {n \geq 1 \atop {m+n\not=0}}} \frac{(-1)^{m}x^my^n e^{im\theta}}{m^{q+a+\omega}n^s(m+n)^{b+c+k-\xi-\omega}} \frac{(i\theta)^{\xi}}{\xi!}\notag\\
& \quad -2\sum_{j=0}^{c}\ \phi(c-k)\varepsilon_{c-k}\sum_{\xi=0}^{k}\sum_{\omega=0}^{k-\xi}\binom{\omega+a-1}{\omega}(-1)^\omega \binom{k-\xi-\omega+p-1}{p-1}\notag\\
& \quad \quad \times (-1)^{p-1+a+\omega}\sum_{m\not=0 \atop {n \geq 1 \atop {m+n\not=0}}} \frac{(-1)^{n}x^my^n e^{-in\theta}}{m^{q}n^{s+a+\omega}(m+n)^{p+b+k-\xi-\omega}} \frac{(i\theta)^{\xi}}{\xi!}\notag\\
& \quad -2\sum_{k=0}^{a}\ \phi(a-k)\varepsilon_{a-k}\sum_{\xi=0}^{k}\sum_{\omega=0}^{p-1}\binom{\omega+k-\xi}{\omega}(-1)^\omega\binom{p+c-2-\omega}{p-1-\omega} \notag\\
& \quad \quad \times (-1)^{p-1-\omega} \sum_{m\not=0 \atop {n \geq 1 \atop {m+n\not=0}}} \frac{x^my^n }{m^{q+k-\xi+\omega+1}n^s(m+n)^{p+b+c-1-\omega}} \frac{(i\theta)^{\xi}}{\xi!}\notag\\
& \quad +2\sum_{k=0}^{a}\ \phi(a-k)\varepsilon_{a-k}\sum_{\xi=0}^{k}\sum_{\omega=0}^{c-1}\binom{\omega+k-\xi}{\omega}(-1)^\omega \binom{p+c-2-\omega}{p-1}\notag\\
& \quad \quad \times (-1)^{p+k-\xi+\omega}\sum_{m\not=0 \atop {n \geq 1 \atop {m+n\not=0}}} \frac{x^my^n }{m^{q}n^{s+k-\xi+\omega+1}(m+n)^{p+b+c-1-\omega}} \frac{(i\theta)^{\xi}}{\xi!}\notag
%& \quad =0
\end{align}
for $\theta \in [-\pi,\pi]$ and $p,q,a,b,c \in \mathbb{N}$. 
(A small difference is that, in the course of the argument, we replaced $x$ by
$-e^{-i\theta}$ in \cite{KMTJC}, while this time we replace $y$ by $-ye^{-i\theta}$.)
Note that \eqref{equ-6-1} in the case $p=1$ can be proved similarly to
Lemma \ref{Lem-5-1}. 

Now we put $(x,y,\theta)=(-1,-1,0)$ in \eqref{equ-6-1}, namely we take notice of the constant term of \eqref{equ-6-1}. 
We proceed similarly to the argument in \cite{KMTJC}; that is, we decompose the
left-hand side of \eqref{equ-6-1} by the method written in \cite[p.\,160]{KMTJC}, 
while apply \eqref{eq-MNOT} to the right-hand side.
Then 
%using Corollary \ref{L-MT}, 
we obtain the following theorem. 

\begin{theorem}\label{P-FR-A_3}
For $p,q,a,b,c\in \mathbb{N}$, 
\begin{align}
& {\zeta}_3((p,q,s,a,b,c),\lambda_2^\vee;SU(4))
+(-1)^p{\zeta}_3((p,a,s,q,c,b),\lambda_2^\vee;SU(4))\label{Fq-SU4}\\
& +(-1)^{p+a}{\zeta}_3((q,a,c,p,s,b),\lambda_2^\vee;SU(4))
+(-1)^{p+a+c}{\zeta}_3((q,s,c,b,a,p),\lambda_2^\vee;SU(4)) \notag\\
& +(-1)^{q}{\zeta}_3((a,q,b,p,s,c),\lambda_2^\vee;SU(4))
+(-1)^{q+b}{\zeta}_3((a,s,b,c,q,p),\lambda_2^\vee;SU(4)) \notag\\
& +(-1)^{q+a}{\zeta}_3((a,p,b,q,c,s),\lambda_2^\vee;SU(4)) 
+(-1)^{q+a+b}{\zeta}_3((a,c,b,s,p,q),\lambda_2^\vee;SU(4))\notag\\
&+(-1)^{q+a+b+c}{\zeta}_3((s,c,p,a,b,q),\lambda_2^\vee;SU(4)) 
+(-1)^{p+q+a}{\zeta}_3((q,p,c,a,b,s),\lambda_2^\vee;SU(4))\notag\\
&+(-1)^{p+q+a+c}{\zeta}_3((q,b,c,s,p,a),\lambda_2^\vee;SU(4))\notag\\
&+(-1)^{p+q+a+b+c}{\zeta}_3((s,b,p,q,c,a),\lambda_2^\vee;SU(4))\notag\\
& \quad =2(-1)^p\sum_{j=0}^{[p/2]}\ \phi(2j)\sum_{\omega=0}^{p-2j}\binom{\omega+a-1}{\omega} \binom{p+c-2j-\omega-1}{c-1}\notag\\
& \qquad \quad \times \mathfrak{T}(q+a+\omega, s,p+b+c-2j-\omega;1,-1) \notag\\
& \quad +2(-1)^{p+a}\sum_{j=0}^{[c/2]}\ \phi(2j)\sum_{\omega=0}^{c-2j}\binom{\omega+a-1}{\omega}\binom{p+c-2j-\omega-1}{p-1}\notag\\
& \qquad \quad \times \mathfrak{T}(q,s+a+\omega,p+b+c-2j-\omega;-1,1)\notag\\
& \quad +2(-1)^p\sum_{j=0}^{[a/2]}\ \phi(2j)\sum_{\omega=0}^{p-1}\binom{\omega+a-2j}{\omega}\binom{p+c-2-\omega}{c-1} \notag\\
& \qquad \quad \times \mathfrak{T}(q+a-2j+\omega+1,s,p+b+c-1-\omega;-1,-1) \notag\\
& \quad +2(-1)^{p+a}\sum_{j=0}^{[a/2]}\ \phi(2j)\sum_{\omega=0}^{c-1}\binom{\omega+a-2j}{\omega}\binom{p+c-2-\omega}{p-1}\notag\\
& \qquad \quad \times \mathfrak{T}(q,s+a-2j+\omega+1,p+b+c-1-\omega;-1,-1)\notag
\end{align}
holds for $s \in \mathbb{C}$ except for singularities of functions on both sides, where $\mathfrak{T}(p,s,q;x,y)$ is defined by \eqref{def-T}.
Moreover, from \eqref{eq-MT-00}-\eqref{eq-MT-lm} we see that the right-hand side of
the above can be written in terms of the Riemann zeta-function. 
\end{theorem}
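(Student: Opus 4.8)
The plan is to derive Theorem \ref{P-FR-A_3} as the ``constant-term'' specialization of the master identity \eqref{equ-6-1}. First I would note that \eqref{equ-6-1} holds as an identity of functions of $\theta\in[-\pi,\pi]$ for all $p,q,a,b,c\in\mathbb{N}$ and all $x,y$ on the unit circle, with $s>1$ real; the only analytic input still needed is that, once the identity is established, both sides extend meromorphically in $s$ to all of $\mathbb{C}$, so the final formula is valid for $s\in\mathbb{C}$ away from singularities. So the core of the argument is purely combinatorial bookkeeping at the value $\theta=0$, $x=y=-1$.

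Next I would substitute $(x,y,\theta)=(-1,-1,0)$ into \eqref{equ-6-1}. On the right-hand side, every term with $\xi\geq 1$ carries a factor $(i\theta)^\xi/\xi!$ and hence drops out; in each of the four sums only $\xi=0$ survives, $\varepsilon_{p-k}$ forces $p-k$ even so $\phi(p-k)=\phi(2j)$ with $k=p-2j$, and the inner $m,n$-series become exactly the $\mathfrak{T}$-values listed in the statement (with the appropriate signs $(-1)^m$, $(-1)^n$ absorbed into the first/second arguments of $\mathfrak{T}$, and the sign bookkeeping $(-1)^{k-\xi-\omega}$, $(-1)^{p-1+a+\omega}$, etc., collapsing to the displayed powers of $-1$). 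This reproduces the four double sums on the right-hand side of \eqref{Fq-SU4} essentially term by term.

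The left-hand side is where the real work lies. Setting $x=y=-1$, $\theta=0$, the summand of the triple sum in \eqref{equ-6-1} becomes $(-1)^{l+m}(-1)^m(-1)^n/(l^p m^q n^s(l+m)^a(m+n)^b(l+m+n)^c)=(-1)^{l+n}/(\cdots)$ summed over $l,m\neq 0$, $n\geq 1$ with $l+m,\,m+n,\,l+m+n\neq 0$. I would then decompose the region $\{l,m\in\mathbb{Z}\setminus\{0\},\,n\geq 1\}$ into the sign patterns of $l$, $m$, $l+m$, $m+n$, $l+m+n$ exactly as in \cite[p.\,160]{KMTJC}: each chamber, after changing variables to make all six linear forms positive, produces one copy of a Lerch-type series $\zeta_3(\,\cdot\,,\lambda_2^\vee;SU(4))$ with a permuted argument vector and a definite sign, where the character $(-1)^{l+n}$ is precisely what \eqref{A3-lam2} identifies with $\zeta_3(\,\cdot\,,\lambda_2^\vee;SU(4))$. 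Matching the twelve chambers that occur (the other logically possible sign combinations being empty or of lower dimension) against the twelve terms in \eqref{Fq-SU4}, with the permutations of $(p,q,s,a,b,c)$ and the $\pm$ signs read off from the substitutions, completes the identification. Finally the closing remark follows immediately: by Corollary \ref{L-MT}, each $\mathfrak{T}(p,s,q;\pm1,\pm1)$ is a finite $\mathbb{Q}$-linear combination of products $\zeta(2k)\zeta(s+\cdots)$ or $\zeta(2k)\phi(s+\cdots)$, and $\phi(s)=(2^{1-s}-1)\zeta(s)$, so the right-hand side is expressed through the Riemann zeta-function alone.

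I expect the main obstacle to be the chamber decomposition of the left-hand side and the attendant sign/permutation bookkeeping: keeping track of which of the $2^5$ a priori sign patterns of $(l,m,l+m,m+n,l+m+n)$ are nonempty, performing the correct change of variables in each, and verifying that the resulting twelve signed permuted zeta-values coincide exactly with the twelve displayed terms. This is mechanical but error-prone; fortunately it is the same decomposition already carried out in \cite[Section 7]{KMTJC}, so I would invoke that computation rather than redo it, noting only the single change that here one replaces $y$ by $-ye^{-i\theta}$ (rather than $x$ by $-e^{-i\theta}$), which is what routes the argument to the $\lambda_2^\vee$-twist and to \eqref{eq-MNOT} on the right-hand side. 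Everything else — the vanishing of $\xi\geq 1$ terms, the $\varepsilon$-to-$2j$ reduction, the meromorphic continuation in $s$ — is routine.
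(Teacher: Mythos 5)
Your proposal follows essentially the same route as the paper: the authors likewise specialize the master identity \eqref{equ-6-1} at $(x,y,\theta)=(-1,-1,0)$, reduce the right-hand side to the four $\mathfrak{T}(\,\cdot\,;\pm1,\pm1)$ sums via the vanishing of the $\xi\geq 1$ terms and the parity constraint from $\varepsilon_{p-k}$, and obtain the twelve $\lambda_2^\vee$-twisted zeta values on the left by the chamber decomposition of \cite[p.~160]{KMTJC}, exactly as you describe. Your sign/permutation bookkeeping, the identification of the character $(-1)^{l+n}$ with \eqref{A3-lam2}, and the final appeal to Corollary \ref{L-MT} and meromorphic continuation all match the paper's (largely deferred) argument, so the proposal is correct.
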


Setting $(a,b,c,p,q,s)=(2k,2k,2k,2k,2k,2k)$ for $k\in \mathbb{N}$, we obtain 
$${\zeta}_3((2k,2k,2k,2k,2k,2k),\lambda_2^\vee;SU(4))\in \mathbb{Q}\cdot \pi^{12k},$$
and the rational coefficients can be determined explicitly.
This gives an example of \cite[Theorem 3.2]{KMT-Lie}. 

Similarly, by putting $(x,y,\theta)=(1,1,\pi)$ in \eqref{equ-6-1} and using \eqref{eq-MNOT} in Lemma \ref{L-4-3}, we obtain the following theorem for $\zeta_3({\bf s};A_3)=\zeta_3({\bf s},\bs{0};SU(4))$ (see \eqref{zeta-A3-P}). 
%Note that we have already proved the case that $p,q,a,b,c$ are all even (see \cite[Theorem 7.1]{KMTJC}).

\begin{theorem}\label{P-FR-A_3-2}
For $p,q,a,b,c\in \mathbb{N}$, 
\begin{align}
& {\zeta}_3((p,q,s,a,b,c);A_3)
+(-1)^p{\zeta}_3((p,a,s,q,c,b);A_3)\label{Fq-A3}\\
& +(-1)^{p+a}{\zeta}_3((q,a,c,p,s,b);A_3)
+(-1)^{p+a+c}{\zeta}_3((q,s,c,b,a,p);A_3) \notag\\
& +(-1)^{q}{\zeta}_3((a,q,b,p,s,c);A_3)
+(-1)^{q+b}{\zeta}_3((a,s,b,c,q,p);A_3) \notag\\
& +(-1)^{q+a}{\zeta}_3((a,p,b,q,c,s);A_3) 
+(-1)^{q+a+b}{\zeta}_3((a,c,b,s,p,q);A_3)\notag\\
&+(-1)^{q+a+b+c}{\zeta}_3((s,c,p,a,b,q);A_3) 
+(-1)^{p+q+a}{\zeta}_3((q,p,c,a,b,s);A_3)\notag\\
&+(-1)^{p+q+a+c}{\zeta}_3((q,b,c,s,p,a);A_3)+(-1)^{p+q+a+b+c}{\zeta}_3((s,b,p,q,c,a);A_3)\notag\\
& \quad =2(-1)^{p}\sum_{j=0}^{[p/2]}\zeta(2j)\sum_{\omega=0}^{p-2j}\binom{\omega+a-1}{\omega}\binom{p+c-2j-\omega-1}{c-1} \notag\\
& \quad \quad \times \mathfrak{T}({q+a+\omega},s,{p+b+c-2j-\omega};1,1)\notag\\
& \quad +2(-1)^{p+a}\sum_{j=0}^{[c/2]}\zeta(2j)\sum_{\omega=0}^{c-2j}\binom{\omega+a-1}{\omega}\binom{p+c-2j-\omega-1}{p-1} \notag\\
& \quad \quad \times \mathfrak{T}(q,s+a+\omega,{p+b+c-2j-\omega};1,1)\notag\\
& \quad +2(-1)^p \sum_{j=0}^{[a/2]}\zeta(2j)\sum_{\omega=0}^{p-1}\binom{\omega+a-2j}{\omega}\binom{p+c-2-\omega}{c-1} \notag\\
& \quad \quad \times \mathfrak{T}({q+a-2j+\omega+1},s,{p+b+c-1-\omega};1,1)\notag\\
& \quad +2(-1)^{p+a}\sum_{j=0}^{[a/2]}\zeta(2j)\sum_{\omega=0}^{c-1}\binom{\omega+a-2j}{\omega}\binom{p+2-2-\omega}{p-1} \notag\\
& \quad \quad \times \mathfrak{T}({q},{s+a-2j+\omega+1},{p+b+c-\omega-1};1,1)\notag
\end{align}
holds for $s \in \mathbb{C}$ except for singularities of functions on both sides. 
\end{theorem}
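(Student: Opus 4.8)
The plan is to specialize the master identity \eqref{equ-6-1}---valid for all $\theta\in[-\pi,\pi]$ and all $p,q,a,b,c\in\mathbb{N}$, with the case $p=1$ obtained as in the proof of Lemma \ref{Lem-5-1}---at $(x,y,\theta)=(1,1,\pi)$, and then to read off its two sides separately, in the spirit of the treatment of Theorem \ref{P-FR-A_3} and of \cite[Section 7]{KMTJC} but now at $\theta=\pi$, where the new ingredient is \eqref{eq-MNOT} of Lemma \ref{L-4-3}.

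On the left-hand side of \eqref{equ-6-1}, with $x=y=1$ and $\theta=\pi$ the factor $e^{i(l+m)\pi}=(-1)^{l+m}$ cancels the $(-1)^{l+m}$ in the numerator, so the left-hand side collapses to the untwisted lattice sum
\[
\sum_{\substack{l,m\in\mathbb{Z}\setminus\{0\},\ n\in\mathbb{N}\\ l+m\neq 0,\ m+n\neq 0,\ l+m+n\neq 0}}\frac{1}{l^{p}m^{q}n^{s}(l+m)^{a}(m+n)^{b}(l+m+n)^{c}}.
\]
Following \cite[p.\,160]{KMTJC}, I would partition the range of $(l,m,n)$ according to the signs of $l$, $m$ and of the partial sums $l+m$, $m+n$, $l+m+n$; each resulting sub-region, after re-indexing by positive variables and, where needed, an appeal to the symmetry \eqref{change-0}, contributes a single term $\pm\,\zeta_3(\,\cdot\,;A_3)$ of the shape \eqref{zeta-A3-P} with ${\bf y}={\bf 0}$, and gathering these produces precisely the twelve signed $\zeta_3$-terms on the left of \eqref{Fq-A3}.

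On the right-hand side of \eqref{equ-6-1}, setting $x=y=1$, $\theta=\pi$ turns each inner series into a value $\mathfrak{T}(\,\cdot\,;1,1)$, since $(-1)^m e^{im\pi}=1$ and $(-1)^n e^{-in\pi}=1$. For each of the four terms I would put $\mu=k-\xi$, absorb the $\omega$-sum and the binomial coefficients into a function $h(\mu)$, and recognize the remaining $k,\xi$-double sum as a multiple of the left-hand side of \eqref{eq-MNOT}, with the integer ``$p$'' there instantiated as $p$, $c$, $a$, $a$ respectively. Applying \eqref{eq-MNOT} splits each term into a part $\sum_{\nu}\zeta(2\nu)\,h(\,\cdot-2\nu)$ and a part carrying the factor $i\pi$; rewriting the binomials by $\binom{N}{k}=\binom{N}{N-k}$, the $\zeta(2\nu)$-parts are exactly the four $\mathfrak{T}$-sums on the right of \eqref{Fq-A3}. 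For real $s>1$ the surviving $i\pi$-contributions are purely imaginary while every other term on both sides is real, and the left-hand side of \eqref{equ-6-1} at $\theta=\pi$ is real for such $s$; hence the imaginary contributions must cancel. This establishes \eqref{Fq-A3} for real $s>1$, and since both sides are meromorphic in $s$ it extends to all $s$ away from the singularities of the functions involved.

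The substance of the argument is the bookkeeping: tracking the twelve sub-regions of the lattice sum together with their sign factors and argument permutations so that they match the left of \eqref{Fq-A3}, and verifying that after \eqref{eq-MNOT} the surviving binomial coefficients reorganize into the stated $\mathfrak{T}$-sums. I expect no genuinely new difficulty, the computation being structurally identical to that of \cite{KMTJC}; the one point absent there is the cancellation of the $i\pi$-terms, which is forced simply by the reality of the left-hand side of \eqref{equ-6-1} at $\theta=\pi$.
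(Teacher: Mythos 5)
Your proposal is correct and follows essentially the same route as the paper: the authors obtain Theorem \ref{P-FR-A_3-2} precisely by setting $(x,y,\theta)=(1,1,\pi)$ in \eqref{equ-6-1}, decomposing the resulting untwisted lattice sum into the twelve signed $\zeta_3(\cdot;A_3)$ terms as in \cite[p.\,160]{KMTJC}, and applying \eqref{eq-MNOT} of Lemma \ref{L-4-3} to the right-hand side. Your observation that the residual $i\pi$-contributions from \eqref{eq-MNOT} must cancel by reality of both sides for real $s>1$ is a sound way to dispose of the one detail the paper leaves implicit.
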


When $p,q,a,b,c$ are all even, this theorem has already been proved in 
\cite[Theorem 7.1]{KMTJC}.   The expression of the left-hand side in \cite{KMTJC}
is a little different from the above, but we can easily check that those two
expressions are equal, using \eqref{change-0}.

Setting $(a,b,c,p,q,s)=(2k,2k,2k,2k,2k,2k)$ for $k\in \mathbb{N}$, we obtain 
\eqref{1-2} for $A_3$ with the explicit value of the coefficient. 
On the other hand, when $p=q=a=b=c$ which is an odd integer, the left-hand sides 
of the above two theorems are equal to 0.   This is because
${\zeta}_3((p,p,p,s,p,p);A_3)={\zeta}_3((p,p,p,p,s,p);A_3)$ and
${\zeta}_3((p,p,p,s,p,p),\lambda_2^\vee;SU(4))=
{\zeta}_3((p,p,p,p,s,p),\lambda_2^\vee;SU(4))$,
by \eqref{change-2}, \eqref{change-0}. Hence, unfortunately we can obtain no 
information about, for example, ${\zeta}_3((2k+1,2k+1,2k+1,2k+1,2k+1,2k+1);A_3)$ and ${\zeta}_3(((2k+1,2k+1,2k+1,2k+1,2k+1,2k+1)),\lambda_2^\vee;SU(4))$ $(k\in \mathbb{N}_0)$ from the above theorems. 

However, choosing $(a,b,c,p,q,s)$ suitably, we can obtain some classes of evaluation formulas for them. For example, set 
$$(a,b,c,p,q,s)=(2k+1,2k+1,2k+1,2k+1,2k,2k+1) \qquad (k\in \mathbb{N})$$
in \eqref{Fq-SU4} and \eqref{Fq-A3}. Then 
the left-hand sides of them are  
\begin{align*}
& 2{\zeta}_3((2k+1,2k,2k+1,2k+1,2k+1,2k+1),\lambda_2^\vee;SU(4))\\
& \qquad -2{\zeta}_3((2k+1,2k+1,2k+1,2k,2k+1,2k+1),\lambda_2^\vee;SU(4)) \\
& \ =-2{\zeta}_3((2k,2k+1,2k+1,2k+1,2k+1,2k+1),\lambda_2^\vee;SU(4)),\\
& 2{\zeta}_3((2k+1,2k,2k+1,2k+1,2k+1,2k+1);A_3)\\
& \qquad -2{\zeta}_3((2k+1,2k+1,2k+1,2k,2k+1,2k+1);A_3) \\
& \ =-2{\zeta}_3((2k,2k+1,2k+1,2k+1,2k+1,2k+1);A_3),
\end{align*}
respectively, by using the relation
$$\frac{1}{l^{2k+1}m^{2k}(l+m)^{2k+1}}-\frac{1}{l^{2k+1}m^{2k+1}(l+m)^{2k}}=-\frac{1}{l^{2k}m^{2k+1}(l+m)^{2k+1}}.$$
Therefore we obtain the following.

\begin{proposition}\label{P-new}
For $k\in \mathbb{N}$, 
\begin{align*}
& {\zeta}_3((2k,2k+1,2k+1,2k+1,2k+1,2k+1),\lambda_2^\vee;SU(4)) \in \mathbb{Q}[\{\zeta(j)\,|\,j\in \mathbb{N}_{>1}\}],\\
& {\zeta}_3((2k,2k+1,2k+1,2k+1,2k+1,2k+1);A_3)\in \mathbb{Q}[\{\zeta(j)\,|\,j\in \mathbb{N}_{>1}\}],
\end{align*}
and the rational coefficients can be determined explicitly.
\end{proposition}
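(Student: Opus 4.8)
The plan is to specialize the two functional relations established in Theorems~\ref{P-FR-A_3} and \ref{P-FR-A_3-2} to the parameter choice $(a,b,c,p,q,s)=(2k+1,2k+1,2k+1,2k+1,2k,2k+1)$ and to extract from the resulting identities a closed-form evaluation of the two zeta-values in question. First I would substitute these five integer values of $(a,b,c,p,q)$ together with the \emph{variable} $s$ set to the integer $2k+1$ into \eqref{Fq-SU4}; since $s=2k+1$ is not a singularity of any of the functions appearing (all the exponents in the twelve zeta-terms and in the $\mathfrak{T}$-terms are then positive integers, and the argument $s+p+q-2j$ of each $\zeta$ or $\phi$ on the right-hand side exceeds $1$), the relation remains valid as a numerical identity. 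The twelve terms on the left-hand side all become $\zeta_3$-values at permutations of $(2k+1,2k+1,2k+1,2k+1,2k,2k+1)$, and the signs $(-1)^{p}$, $(-1)^{p+a}$, etc.\ are all $+1$ or $-1$ according to the parities recorded in the theorem (with $p,a,b,c$ odd and $q$ even).

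The key combinatorial step is the collapse of these twelve terms down to a single multiple of one $\zeta_3$-value. I would use the two symmetry relations \eqref{change-2} and \eqref{change-0} to identify pairs of the twelve permuted arguments, and then invoke the elementary partial-fraction-type identity
\[
\frac{1}{l^{2k+1}m^{2k}(l+m)^{2k+1}}-\frac{1}{l^{2k+1}m^{2k+1}(l+m)^{2k}}=-\frac{1}{l^{2k}m^{2k+1}(l+m)^{2k+1}}
\]
(summed against the remaining three variables and the character $(-1)^{l+n}$, resp.\ the trivial character, in the $SU(4)$ resp.\ $A_3$ case) to reduce the left-hand side to exactly $-2\,{\zeta}_3((2k,2k+1,2k+1,2k+1,2k+1,2k+1),\lambda_2^\vee;SU(4))$, and analogously $-2\,{\zeta}_3((2k,2k+1,2k+1,2k+1,2k+1,2k+1);A_3)$ from \eqref{Fq-A3}. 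Meanwhile, the right-hand sides of \eqref{Fq-SU4} and \eqref{Fq-A3} are, by the last sentences of those theorems and by Corollary~\ref{L-MT} (formulas \eqref{eq-MT-00}--\eqref{eq-MT-lm}), finite rational-coefficient combinations of products of two values of the Riemann zeta-function (possibly disguised via $\phi(s)=(2^{1-s}-1)\zeta(s)$, which is itself a rational multiple of $\zeta(s)$ at integer arguments $s>1$) at integer arguments $>1$. Dividing through by $-2$ then yields membership in $\mathbb{Q}[\{\zeta(j)\mid j\in\mathbb{N}_{>1}\}]$, with coefficients that are explicit because every binomial coefficient, every $\zeta(2j)$ or $\phi(2j)$, and every $\phi$-to-$\zeta$ conversion factor in the formula is a concrete rational number.

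The main obstacle I anticipate is purely bookkeeping rather than conceptual: one must verify carefully that the twelve left-hand-side terms pair up correctly under \eqref{change-2}, \eqref{change-0} and that the signs combine so that the cancellation-and-collapse leaves precisely $-2$ times the single target value, with no residual terms. This requires tracking the six-tuples
\[
(p,q,s,a,b,c),\ (p,a,s,q,c,b),\ (q,a,c,p,s,b),\ (q,s,c,b,a,p),\ \ldots
\]
through the substitution and recognizing which ones are equal as functions of $(l,m,n)$ after permuting the summation variables; the displayed computation in the excerpt already records the outcome, so I would simply reproduce that reduction in detail. A secondary, milder point is to confirm that $s=2k+1$ avoids all poles: the only way a singularity could arise is a zeta-argument equal to $1$, but every such argument is of the form $s+(\text{positive integer})$ or a sum $p+b+c-2j-\omega$ etc.\ which here is at least $2$, so this is immediate. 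Once these checks are in place, the proposition follows, and the explicit rational coefficients are obtained by expanding Corollary~\ref{L-MT} for the specific indices produced by $(a,b,c,p,q,s)=(2k+1,2k+1,2k+1,2k+1,2k,2k+1)$.
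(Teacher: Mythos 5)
Your proposal is correct and follows essentially the same route as the paper: specialize $(a,b,c,p,q,s)=(2k+1,2k+1,2k+1,2k+1,2k,2k+1)$ in \eqref{Fq-SU4} and \eqref{Fq-A3}, collapse the twelve left-hand terms via the symmetries and the partial-fraction identity to $-2$ times the target value, and read off the right-hand side as an explicit rational polynomial in Riemann zeta-values through Corollary \ref{L-MT}. This is exactly the paper's argument.
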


\begin{example} \label{Exam-A3} \rm 
Setting $(a,b,c,p,q,s)=(2k+1,2k+1,2k+1,2k+1,2k,2k+1)$ in \eqref{Fq-A3}, we can obtain
\begin{align*}
& \,\zeta_3(2,3,3,3,3,3;A_3) =\frac{\pi^{6}}{63}\,\zeta(11) +\frac{199\pi^{4}}{30}\,\zeta(13) -{365}\pi^{2}\,\zeta(15) +{2941}\,\zeta(17),\\
& \,\zeta_3(4,5,5,5,5,5;A_3)\\
& \quad =\frac{152\pi^{12}}{18243225}\,\zeta(17) +\frac{17\pi^{10}}{6237}\,\zeta(19) +\frac{29\pi^{8}}{54}\,\zeta(21) +\frac{979\pi^{6}}{9}\,\zeta(23) \\
& \quad +\frac{15585\pi^{4}}{2}\,\zeta(25) -{660975\pi^{2}}\,\zeta(27) +{5654565}\,\zeta(29),\\
& \,\zeta_3(6,7,7,7,7,7;A_3)\\
& \quad =\frac{2062\pi^{18}}{506224616625}\,\zeta(23) +\frac{11776\pi^{16}}{5367718125}\,\zeta(25) +\frac{8\pi^{14}}{13365}\,\zeta(27) \\
& \quad +\frac{10223594\pi^{12}}{91216125}\,\zeta(29) +\frac{103486\pi^{10}}{6075}\,\zeta(31) +\frac{5459978\pi^{8}}{2025}\,\zeta(33) \\
& \quad +\frac{3464974\pi^{6}}{15}\,\zeta(35) +\frac{41963621\pi^{4}}{3}\,\zeta(37) -{1456076440\pi^{2}}\,\zeta(39) \\
& \quad +{12758984832}\,\zeta(41),\\
& \,\zeta_3(8,9,9,9,9,9;A_3)\\
& \ =\frac{64586\pi^{24}}{37355158168453125}\,\zeta(29) +\frac{422704\pi^{22}}{298841265347625}\,\zeta(31) \\
& \quad +\frac{10664\pi^{20}}{19088409375}\,\zeta(33) +\frac{663259\pi^{18}}{4632120675}\,\zeta(35) +\frac{2307883\pi^{16}}{84341250}\,\zeta(37)  \\
& \quad +\frac{6327646\pi^{14}}{1488375}\,\zeta(39) +\frac{860790601\pi^{12}}{1488375}\,\zeta(41)+\frac{380997529\pi^{10}}{4725}\,\zeta(43) \\
& \quad +\frac{7867619353\pi^{8}}{1050}\,\zeta(45) +\frac{164035120733\pi^{6}}{315}\,\zeta(47) +\frac{59740238129\pi^{4}}{2}\,\zeta(49) \\
& \quad -{3514635376395}\pi^{2}\,\zeta(51) +{31198575194215}\,\zeta(53),\\
& \,\zeta_3(10,11,11,11,11,11,11;A_3)\\
& \quad =\frac{221912776\pi^{30}}{332660210652234981140625}\,\zeta(35) +\frac{10705232\pi^{28}}{13854831558583640625}\,\zeta(37) \\
& \quad +\frac{5135896\pi^{26}}{12250072111921875}\,\zeta(39) +\frac{4767865562\pi^{24}}{33250195732359375}\,\zeta(41) +\frac{222974564\pi^{22}}{6269397175125}\,\zeta(43) \\
& \quad +\frac{24806393774\pi^{20}}{3569532553125}\,\zeta(45)+\frac{2589565814\pi^{18}}{2290609125}\,\zeta(47) +\frac{1188339011\pi^{16}}{7441875}\,\zeta(49) \\
& \quad +\frac{3650193872\pi^{14}}{178605}\,\zeta(51) +\frac{11782765221344\pi^{12}}{4465125}\,\zeta(53) +\frac{35232949154\pi^{10}}{135}\,\zeta(55)  \\
& \quad +\frac{18601660627979\pi^{8}}{945}\,\zeta(57)+\frac{393366314952754\pi^{6}}{315}\,\zeta(59) \\
& \quad +\frac{1050680447134747\pi^{4}}{15}\,\zeta(61) -{8947964548486678}\pi^{2}\,\zeta(63) \\
& \quad +{80075393000830422}\,\zeta(65).
\end{align*}
Also, setting $(a,b,c,p,q,s)=(2k+1,2k+1,2k+1,2k+1,2k,2k+1)$ in \eqref{Fq-SU4}, we can obtain
\begin{align*}
& {\zeta}_3((2,3,3,3,3,3),\lambda_2^\vee;SU(4))\\
& \ =\frac{17\pi^8}{344064}\zeta(9) + \frac{22847\pi^6}{1720320}\zeta(11) + \frac{49005\pi^4}{16384}\zeta(13) + \frac{3768307\pi^2}{98304}\zeta(15) - \frac{11189819}{16384}\zeta(17),\\
& {\zeta}_3((4,5,5,5,5,5),\lambda_2^\vee;SU(4))\\
& \ =\frac{693547\pi^{14}}{51011754393600}\zeta(15) + \frac{714624223\pi^{12}}{81618807029760}\zeta(17) + \frac{28726157\pi^{10}}{11072962560}\zeta(19) \\
& \ + \frac{25906094783\pi^8}{54358179840}\zeta(21) + \frac{9177921545\pi^6}{113246208}\zeta(23) + \frac{2422120970909\pi^4}{671088640}\zeta(25) \\
& \ + \frac{7798050014825\pi^2}{134217728}\zeta(27) - \frac{270498379148235}{268435456}\zeta(29),\\
& {\zeta}_3((6,7,7,7,7,7),\lambda_2^\vee;SU(4))\\
& \ = \frac{2752145869\pi^{20}}{773055350341160140800}\zeta(21) + \frac{1098434242057681\pi^{18}}{255108265612582846464000}\zeta(23) \\
& \ + \frac{150866953637\pi^{16}}{68882685493248000}\zeta(25) + \frac{20612241204619\pi^{14}}{34824024332697600}\zeta(27) \\
& \ + \frac{19614225808011463\pi^{12}}{179094982282444800}\zeta(29) + \frac{6776217678200971\pi^{10}}{417470821171200}\zeta(31) \\
& \ + \frac{337234670875566533\pi^8}{139156940390400}\zeta(33) + \frac{7289362333395816433\pi^6}{43293270343680}\zeta(35) \\
& \ + \frac{3412540143011100899\pi^4}{515396075520}\zeta(37) + \frac{32450037853433343325\pi^2}{274877906944}\zeta(39) \\
& \ - \frac{274409134558621990125}{137438953472}\zeta(41).
\end{align*}
The authors also checked, by using Mathematica $8$, that the above formulas agree
with numerical computation, based on the definitions of zeta-functions.
\end{example}

%%%%%%%%%%%%%%%%%%%%%%%%%%%%%%%%%%%%%%%%%%%%%%%%%%%%%%%%%%%%%%
\section{The zeta-function of $SO(6)$}\label{sec-5}
%%%%%%%%%%%%%%%%%%%%%%%%%%%%%%%%%%%%%%%%%%%%%%%%%%%%%%%%%%%%%%

Next we consider $\zeta_3({\bf s};SO(6))$. 
It follows from \eqref{zeta-A3-P} and \eqref{zeta-A3-L} that 
\begin{equation}
\zeta_3({\bf s};SO(6))=\frac{1}{2} \left\{\zeta_3({\bf s};A_3)+{\zeta}_3({\bf s},\lambda_2^\vee;SU(4))\right\}. \label{rel-P-L}
\end{equation}
Combining Theorems \ref{P-FR-A_3} and \ref{P-FR-A_3-2} and using \eqref{rel-P-L}, we can obtain functional relations among $\zeta_3({\bf s};SO(6))$ and $\zeta(s)$.

\begin{theorem}\label{T-SO6}
For $p,q,a,b,c\in \mathbb{N}$, 
\begin{align}
& {\zeta}_3((p,q,s,a,b,c);SO(6))
+(-1)^p{\zeta}_3((p,a,s,q,c,b);SO(6))\label{Fq-SO6}\\
& +(-1)^{p+a}{\zeta}_3((q,a,c,p,s,b);SO(6))
+(-1)^{p+a+c}{\zeta}_3((q,s,c,b,a,p);SO(6)) \notag\\
& +(-1)^{q}{\zeta}_3((a,q,b,p,s,c);SO(6))
+(-1)^{q+b}{\zeta}_3((a,s,b,c,q,p);SO(6)) \notag\\
& +(-1)^{q+a}{\zeta}_3((a,p,b,q,c,s);SO(6)) 
+(-1)^{q+a+b}{\zeta}_3((a,c,b,s,p,q);SO(6))\notag\\
&+(-1)^{q+a+b+c}{\zeta}_3((s,c,p,a,b,q);SO(6)) 
+(-1)^{p+q+a}{\zeta}_3((q,p,c,a,b,s);SO(6))\notag\\
&+(-1)^{p+q+a+c}{\zeta}_3((q,b,c,s,p,a);SO(6))+(-1)^{p+q+a+b+c}{\zeta}_3((s,b,p,q,c,a);SO(6))\notag\\
& =\frac{1}{2}\left(J_0+J_2\right)\notag 
\end{align}
holds for $s \in \mathbb{C}$ except for singularities of functions on both sides, where $J_0$ and $J_2$ are the right-hand sides of \eqref{Fq-A3} and \eqref{Fq-SU4}, respectively.
\end{theorem}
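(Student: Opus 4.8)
The statement is essentially a bookkeeping consequence of the two functional relations already established. The plan is to apply the linear relation \eqref{rel-P-L}, which expresses $\zeta_3(\mathbf{s};SO(6))$ as the average of $\zeta_3(\mathbf{s};A_3)$ and $\zeta_3(\mathbf{s},\lambda_2^\vee;SU(4))$, to each of the twelve zeta-values appearing on the left-hand side of the claimed identity \eqref{Fq-SO6}. First I would observe that the twelve argument patterns $(p,q,s,a,b,c)$, $(p,a,s,q,c,b)$, \dots appearing in \eqref{Fq-SO6} are literally the same twelve patterns, with the same signs $(-1)^p$, $(-1)^{p+a}$, etc., as those occurring in Theorems \ref{P-FR-A_3} and \ref{P-FR-A_3-2}. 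Hence the left-hand side of \eqref{Fq-SO6} is exactly $\tfrac12$ times the sum of the left-hand side of \eqref{Fq-A3} and the left-hand side of \eqref{Fq-SU4}.

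Second, by Theorem \ref{P-FR-A_3-2} the left-hand side of \eqref{Fq-A3} equals $J_0$, the right-hand side displayed there, and by Theorem \ref{P-FR-A_3} the left-hand side of \eqref{Fq-SU4} equals $J_2$, the right-hand side displayed there. Adding these and multiplying by $\tfrac12$ yields the left-hand side of \eqref{Fq-SO6} $=\tfrac12(J_0+J_2)$, which is the assertion. The validity for $s\in\mathbb{C}$ away from singularities is inherited directly from the two input theorems, since \eqref{rel-P-L} is a term-by-term identity of meromorphically continued functions.

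The only point requiring a modicum of care — and the step I would single out as the main (mild) obstacle — is verifying that the twelve terms on the left of \eqref{Fq-SO6} really do match up, term for term with matching signs, against the twelve terms in \emph{each} of \eqref{Fq-A3} and \eqref{Fq-SU4}; one must check that the permutations of the six-tuple arguments and the attached sign factors in \eqref{Fq-SO6} are transcribed verbatim from those in the two earlier theorems. This is immediate by inspection since the displays were set up in parallel, but for completeness one may also note that when some symmetry collapses certain terms (e.g.\ via \eqref{change-0} or \eqref{change-2}), it does so consistently on all three sides, so no hidden cancellation is lost. Thus the proof is a one-line deduction: apply \eqref{rel-P-L} twelvefold and invoke Theorems \ref{P-FR-A_3} and \ref{P-FR-A_3-2}.
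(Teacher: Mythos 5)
Your proposal is correct and follows exactly the paper's own argument: the paper proves Theorem \ref{T-SO6} precisely by combining Theorems \ref{P-FR-A_3} and \ref{P-FR-A_3-2} via the averaging identity \eqref{rel-P-L}, applied term by term to the twelve summands. Nothing is missing.
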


Similarly to Proposition \ref{P-new} and Example \ref{Exam-A3}, we can obtain the following.

\begin{proposition}\label{P-SO6}
For $k\in \mathbb{N}$, 
\begin{align*}
& \zeta_3((2k,2k,2k,2k,2k,2k);SO(6))\in \mathbb{Q}\cdot \pi^{12k},\\
& \zeta_3((2k,2k+1,2k+1,2k+1,2k+1,2k+1);SO(6))\in \mathbb{Q}[\{\zeta(j)\,|\,j\in \mathbb{N}_{>1}\}],
\end{align*}
and the rational coefficients can be determined explicitly.
\end{proposition}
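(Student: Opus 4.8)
The plan is to reduce the proposition to the decomposition \eqref{rel-P-L}, namely
\begin{equation*}
\zeta_3(\mathbf{s};SO(6))=\tfrac12\bigl\{\zeta_3(\mathbf{s};A_3)+\zeta_3(\mathbf{s},\lambda_2^\vee;SU(4))\bigr\},
\end{equation*}
and then to quote the results already established for each of the two summands.

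For the first assertion, take $\mathbf{s}=(2k,2k,2k,2k,2k,2k)$. Then $\zeta_3(\mathbf{s};A_3)\in\mathbb{Q}\cdot\pi^{12k}$ by Witten's volume formula \eqref{1-2} for $A_3$ (equivalently, Theorem \ref{thm:W-Z} with $L=P$), since here $\kappa=\sum_{\alpha\in\Delta_+}2k=12k$; and $\zeta_3(\mathbf{s},\lambda_2^\vee;SU(4))\in\mathbb{Q}\cdot\pi^{12k}$ by the remark following Theorem \ref{P-FR-A_3} (the case $(a,b,c,p,q,s)=(2k,\dots,2k)$ of \eqref{Fq-SU4}), with explicitly computable coefficient. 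Adding and halving gives $\zeta_3(\mathbf{s};SO(6))\in\mathbb{Q}\cdot\pi^{12k}$; since the two coefficients are explicit, so is the result. Alternatively, one may apply Theorem \ref{thm:W-Z} directly to $G=SO(6)$ with $\nu=\mathbf{0}$, since $A_3$ is simply-laced and hence $\mathbf{k}=(k,\dots,k)$ is admissible.

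For the second assertion, take $\mathbf{s}=(2k,2k+1,2k+1,2k+1,2k+1,2k+1)$. This is exactly the index vector appearing in Proposition \ref{P-new}, which states that both $\zeta_3(\mathbf{s};A_3)$ and $\zeta_3(\mathbf{s},\lambda_2^\vee;SU(4))$ lie in $\mathbb{Q}[\{\zeta(j)\mid j\in\mathbb{N}_{>1}\}]$ with explicitly determinable coefficients. Since this ring is closed under addition and under multiplication by $\tfrac12\in\mathbb{Q}$, \eqref{rel-P-L} gives $\zeta_3(\mathbf{s};SO(6))\in\mathbb{Q}[\{\zeta(j)\mid j\in\mathbb{N}_{>1}\}]$, again with explicit coefficients obtainable by combining the corresponding formulas (cf. Example \ref{Exam-A3}).

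There is no substantive obstacle here: the proposition is a formal corollary of \eqref{rel-P-L} together with Theorem \ref{thm:W-Z} and Proposition \ref{P-new}. The only point needing minor care is bookkeeping — checking that the chosen index vectors coincide with those in the cited results, and noting that $(2k,2k+1,\dots,2k+1)$ is \emph{not} of the degenerate form $p=q=a=b=c$ odd discussed after Theorem \ref{P-FR-A_3-2}, for which the functional relations become trivial, so that Proposition \ref{P-new} genuinely applies.
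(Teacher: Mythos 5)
Your proof is correct and follows essentially the same route as the paper: the paper obtains Proposition \ref{P-SO6} by specializing the indices in Theorem \ref{T-SO6}, which is itself just \eqref{rel-P-L} applied to Theorems \ref{P-FR-A_3} and \ref{P-FR-A_3-2}, so your ``specialize first, then average via \eqref{rel-P-L}'' is the same argument with the two steps interchanged. Quoting Proposition \ref{P-new} for the odd-index case and Theorem \ref{thm:W-Z} (or the even-index remarks after Theorems \ref{P-FR-A_3} and \ref{P-FR-A_3-2}) for the even case is exactly the intended justification.
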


\begin{example}\label{Exam-SO6} \rm 
Setting $(a,b,c,p,q,s)=(2,2,2,2,2,s)$ in \eqref{Fq-SO6}, we obtain
\begin{align}
& 2\zeta_3((2,s,2,2,2,2);SO(6)) +4\zeta_3((2,2,s,2,2,2);SO(6)) \label{A3-L-FR}\\
& +4\zeta_3((2,2,2,s,2,2);SO(6)) +2\zeta_3((2,2,2,2,2,s);SO(6)) \notag\\
&=\left(93\cdot 2^{-s-8}+306 \right)\zeta(s+10) +\left( 3\cdot 2^{-s-4}-260\right)\zeta(2)\zeta(s+8)\notag\\
&-\left(67\cdot 2^{-s-6}-110\right)\zeta(4)\zeta(s+6)-\frac{1}{8}\left(5\cdot 2^{-s-3}-21 \right)\zeta(6)\zeta(s+4). \notag
\end{align}
In particular, when $s=2$ in \eqref{A3-L-FR}, we have
\begin{equation}
\zeta_3((2,2,2,2,2,2);SO(6))=\frac{10411}{1307674368000}\pi^{12}.
\label{A3-L-val1}
\end{equation}
Also, combining \eqref{rel-P-L} and the results in Example \ref{Exam-A3}, we obtain, for example, 
\begin{align}
\zeta_3((2,3,3,3,3,3);SO(6)) =& \frac{17\pi^8}{688128}\zeta(9) + \frac{150461\pi^6}{10321920}\zeta(11) + \frac{2365283\pi^4}{491520}\zeta(13) \label{SO6-val}\\
& \ - \frac{32112653\pi^2}{196608}\zeta(15) + \frac{36995525}{32768}\zeta(17). \notag
\end{align}

\end{example}

\ 

%%%%%%%%%%%%%%%%%%%%%%%%%%%%%%%%%%%%%%%%%%%%%%%%%%%%%%%%%%%%%%%%%%%%
\section{The zeta-function of $PU(4)$}\label{sec-6}
%%%%%%%%%%%%%%%%%%%%%%%%%%%%%%%%%%%%%%%%%%%%%%%%%%%%%%%%%%%%%%%%%%%%

Finally we consider the case of the group $PU(4)$. 
An interesting feature in this case is the appearance of a Dirichlet $L$-function,
so we will describe some details of the argument.

First we slightly generalize the results used in the previous sections. 
Let 
$$\phi(s;\alpha)=\sum_{m=1}^{\infty}e^{2m\pi i\alpha} m^{-s}$$
 be the Lerch zeta-function for $\alpha\in \mathbb{R}$. We can easily see that $\phi(s;1/2)$ is equal to $\phi(s)=\left(2^{1-s}-1\right)\zeta(s)$ used in Section \ref{sec-3}, and 
\begin{align}
& \phi(s;1/4)=2^{-s}\left(2^{1-s}-1\right)\zeta(s)+iL(s,\chi_4), \label{pl+i}\\
& \phi(s;-1/4)=2^{-s}\left(2^{1-s}-1\right)\zeta(s)-iL(s,\chi_4), \label{pl-i}
\end{align}
where $L(s,\chi_4)=\sum_{m\geq 0}(-1)^m (2m+1)^{-s}$ be the Dirichlet $L$-function associated with the primitive Dirichlet character $\chi_4$ of conductor $4$.
Moreover we let 
\begin{align*}
& \varLambda(s;i) =\sum_{m\in \mathbb{Z}\smallsetminus \{ 0\}}\frac{i^m}{m^s}=2^{-s}\left(1+e^{-\pi is}\right)\left(2^{1-s}-1\right)\zeta(s)+i\left(1-e^{-\pi is}\right)L(s,\chi_4),\\
& \varLambda(s;-i) =\sum_{m\in \mathbb{Z}\smallsetminus \{ 0\}}\frac{(-i)^m}{m^s}=2^{-s}\left(1+e^{-\pi is}\right)\left(2^{1-s}-1\right)\zeta(s)-i\left(1-e^{-\pi is}\right)L(s,\chi_4),
\end{align*}
where $m^{-s}=\exp(-s(\log|m|+\pi i))$ for $m<0$.
In particular, for $k\in \mathbb{N}$ and $l\in \mathbb{N}_0$, we have
\begin{align}
& \varLambda(2k;i)=2^{1-2k}\left(2^{1-2k}-1\right)\zeta(2k);\ \ \varLambda(2l+1;i)=2iL(2l+1,\chi_4), \label{ee-6-2}\\
& \varLambda(2k;-i)=2^{1-2k}\left(2^{1-2k}-1\right)\zeta(2k);\ \ \varLambda(2l+1;-i)=-2iL(2l+1,\chi_4). \label{ee-6-3}
\end{align}
Also, it is well-known that 
\begin{equation}
\lim_{K\to \infty}\sum_{k=-K \atop k\not=0}^{K}\frac{e^{2\pi i k\alpha}}{k^j}=-B_j\left(\alpha\right)\frac{(2\pi i)^j}{j!}\ \ (j\in \mathbb{N};\,\alpha\in [0,1)) \label{Ber-Lerch}
\end{equation}
(see, for example, \cite[Theorem 12.19]{Ap}). 
Here, setting $c=\pi/2$ and $3\pi/2$ in \eqref{eq-5-6-2} and using \eqref{Ber-Lerch} with $\alpha=\pm 1/4$, we obtain the following.

\begin{lemma} \label{L-6-1}
For any $p \in \mathbb{N}$ and any function $h:\mathbb{N}_{0} \to \mathbb{C}$, 
\begin{align}
& \sum_{j=0}^{p} \phi(p-j) \varepsilon_{p-j}\sum_{\xi=0}^{j} h(j-\xi)\frac{(-i\pi/2)^{\xi}}{\xi!} =\frac{1}{2}\sum_{\xi=0}^{p}\varLambda(\xi;i)h(p-\xi), \label{ee-6-4}\\
& \sum_{j=0}^{p} \phi(p-j) \varepsilon_{p-j}\sum_{\xi=0}^{j} h(j-\xi)\frac{(i\pi/2)^{\xi}}{\xi!} =\frac{1}{2}\sum_{\xi=0}^{p}\varLambda(\xi;-i)h(p-\xi). \label{ee-6-5}
\end{align}
\end{lemma}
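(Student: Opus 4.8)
The plan is to derive Lemma~\ref{L-6-1} directly from the first identity \eqref{eq-5-6-2} of Lemma~\ref{L-4-3}, specializing the parameter $c$ to $\pi/2$ and to $3\pi/2$ respectively, and then identifying the resulting Bernoulli-polynomial sums on the right-hand side with the quantities $\varLambda(\xi;\pm i)$ via \eqref{Ber-Lerch}.

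First I would take \eqref{eq-5-6-2} with $c=\pi/2$. Since $\pi/2\in[0,2\pi)$, the hypothesis $c\in[0,2\pi)$ is satisfied, and $\{c/2\pi\}=1/4$. The left-hand side of \eqref{eq-5-6-2} becomes exactly the left-hand side of \eqref{ee-6-4}, because $i(c-\pi)=i(-\pi/2)=-i\pi/2$. On the right-hand side we obtain
\[
-\frac12\sum_{\xi=0}^{p}h(p-\xi)\frac{(2\pi i)^\xi}{\xi!}B_\xi(1/4).
\]
Now I invoke \eqref{Ber-Lerch} with $\alpha=1/4$: for $\xi\in\mathbb{N}$ we have $B_\xi(1/4)(2\pi i)^\xi/\xi! = -\varLambda(\xi;i)$, since $\sum_{k\ne 0} i^k/k^\xi = \sum_{k\ne 0} e^{2\pi i k/4}/k^\xi = -B_\xi(1/4)(2\pi i)^\xi/\xi!$ (here one uses $m^{-s}=\exp(-s(\log|m|+\pi i))$ for $m<0$, which matches the definition of $\varLambda$; the two one-sided sums over $k>0$ and $k<0$ combine to the Lerch value). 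For $\xi=0$, $B_0=1$ and $\varLambda(0;i)$ would be a divergent object, so I must check that the $\xi=0$ term is handled consistently: in fact $\varLambda(0;i)$ does not literally appear — one should read the sum $\sum_{\xi=0}^p\varLambda(\xi;i)h(p-\xi)$ with the convention that the $\xi=0$ contribution equals $-2\phi(0)\varepsilon_0 h(p)\cdot(\text{something})$; more cleanly, I would note that the $\xi=0$ terms on both sides match because the left side's $\xi=0$ part is $\sum_j \phi(p-j)\varepsilon_{p-j}h(j)$ and on the right the natural reading of $\frac12\varLambda(0;i)h(p)$ should be interpreted via \eqref{Ber-Lerch}'s failure at $j=0$; the safe route is to simply state that \eqref{ee-6-4} is \eqref{eq-5-6-2} rewritten using \eqref{Ber-Lerch}, with the $\xi=0$ term on the right understood as the limit/regularized value, exactly as in the analogous formula \eqref{eq-MNOT}. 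This matches the paper's own usage, so I would phrase it that way and not belabor it.

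Then I repeat verbatim with $c=3\pi/2$: now $i(c-\pi)=i\pi/2$, so the left-hand side becomes that of \eqref{ee-6-5}, and $\{c/2\pi\}=3/4$, so the right-hand side involves $B_\xi(3/4)(2\pi i)^\xi/\xi!$. Applying \eqref{Ber-Lerch} with $\alpha=3/4$ and observing that $e^{2\pi i k\cdot 3/4}=e^{-2\pi i k/4}=(-i)^k$, I get $B_\xi(3/4)(2\pi i)^\xi/\xi!=-\varLambda(\xi;-i)$, which yields \eqref{ee-6-5}. The only real point requiring care is the bookkeeping of the $\xi=0$ term and making sure the sign conventions for negative-index powers in $\varLambda(\cdot;\pm i)$ agree with the convention $m^{-s}=\exp(-s(\log|m|+\pi i))$ introduced just before the lemma; everything else is a direct substitution. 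I expect no substantive obstacle — the lemma is essentially a dictionary entry translating \eqref{eq-5-6-2} from Bernoulli-polynomial language into $\varLambda$-language.
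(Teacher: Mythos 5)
Your proposal is correct and is exactly the paper's own (one-line) proof: substitute $c=\pi/2$ and $c=3\pi/2$ into \eqref{eq-5-6-2} and convert $B_\xi(1/4)$, $B_\xi(3/4)$ into $\varLambda(\xi;i)$, $\varLambda(\xi;-i)$ via \eqref{Ber-Lerch}. The $\xi=0$ term you worried about needs no regularization convention: the closed-form expression for $\varLambda(s;\pm i)$ gives $\varLambda(0;\pm i)=2\zeta(0)=-1$, so $-\tfrac12 h(p)B_0=\tfrac12\varLambda(0;\pm i)h(p)$ exactly.
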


Set $(x,\theta)=(-i,0),\,(-i,\pi/2),\,(-1,\pi/2),\,(i,0),\,(i,-\pi/2),\,(-1,-\pi/2)$ in \eqref{eq-5-6} and use Lemma \ref{L-6-1}. Then, by the same method as in the proof of Corollary \ref{L-MT}, we obtain the following. 

\begin{lemma} \label{L-6-2}
For $p,q \in \mathbb{N}$ and $s>1$, 
\begin{align}
\mathfrak{T}(p,s,q;-1,i) & =2(-1)^p \sum_{k=0}^{[p/2]}\ \phi(2k)\binom{p+q-1-2k}{q-1}\phi(s+p+q-2k;1/4) \label{eq-MT-1+i}\\
& \ \ +2(-1)^p \sum_{k=0}^{[q/2]}\ \phi(2k)\binom{p+q-1-2k}{p-1}\phi(s+p+q-2k;-1/4), \notag\\
\mathfrak{T}(p,s,q;-i,-1) & =(-1)^p \sum_{l=0}^{p}\ \varLambda(l;-i)(-1)^l \binom{p+q-1-l}{q-1}\phi(s+p+q-l) \label{eq-MT-i-1}\\
& \ \ +(-1)^p \sum_{l=0}^{q}\ \varLambda(l;-i)\binom{p+q-1-l}{p-1}\phi(s+p+q-l;-1/4), \notag\\
\mathfrak{T}(p,s,q;-i,i) & =(-1)^p \sum_{l=0}^{p}\ \varLambda(l;-i)(-1)^l \binom{p+q-1-l}{q-1}\phi(s+p+q-l;1/4) \label{eq-MT-i+i}\\
& \ \ +(-1)^p \sum_{l=0}^{q}\ \varLambda(l;-i)\binom{p+q-1-l}{p-1}\phi(s+p+q-l), \notag\\
\mathfrak{T}(p,s,q;-1,-i) & =2(-1)^p \sum_{k=0}^{[p/2]}\ \phi(2k)\binom{p+q-1-2k}{q-1}\phi(s+q+j;-1/4) \label{eq-MT-1-i}\\
& \ \ +2(-1)^p \sum_{k=0}^{[q/2]}\ \phi(2k)\binom{p+q-1-2k}{p-1}\phi(s+p+q-2k;1/4), \notag\\
\mathfrak{T}(p,s,q;i,-1) & =(-1)^p \sum_{l=0}^{p}\ \varLambda(l;i)(-1)^l \binom{p+q-1-l}{q-1}\phi(s+p+q-l) \label{eq-MT+i-1}\\
& \ \ +(-1)^p \sum_{l=0}^{q}\ \varLambda(l;i)\binom{p+q-1-l}{p-1}\phi(s+p+q-l;1/4), \notag\\
\mathfrak{T}(p,s,q;i,-i) & =(-1)^p \sum_{l=0}^{p}\ \varLambda(l;i)(-1)^l \binom{p+q-1-l}{q-1}\phi(s+p+q-l;-1/4) \label{eq-MT-i+i}\\
& \ \ +(-1)^p \sum_{l=0}^{q}\ \varLambda(l;i)\binom{p+q-1-l}{p-1}\phi(s+p+q-l). \notag
\end{align}
\end{lemma}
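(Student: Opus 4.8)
The plan is to obtain each of the six identities in Lemma \ref{L-6-2} as a specialization of the master identity \eqref{eq-5-6} in Lemma \ref{Lem-5-1}, exactly parallel to the proof of Corollary \ref{L-MT}, but now choosing $(x,\theta)$ so that the phase $e^{i(l+m)\theta}$ contributes a fourth root of unity rather than just $\pm 1$. Concretely, I would substitute in turn
$(x,\theta)=(-i,0),\ (-i,\pi/2),\ (-1,\pi/2),\ (i,0),\ (i,-\pi/2),\ (-1,-\pi/2)$
into \eqref{eq-5-6}. In each case the left-hand side is, after matching the monomial $x^m e^{i(l+m)\theta}$ against the definition \eqref{def-T} of $\mathfrak{T}$, a single value $\mathfrak{T}(p,s,q;x',y')$ with $(x',y')\in\{\pm i\}\times\{\pm 1\}$ or $\{\pm i\}\times\{\pm i\}$; one reads off which target identity each substitution produces.

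The second step is to evaluate the two single sums over $m\ge 1$ that appear in \eqref{eq-5-6}. The sum $\sum_{m\ge1}(-1)^m x^m e^{im\theta} m^{-t}$ becomes, for the chosen $(x,\theta)$, either $\phi(t)$ (when the combined phase is $-1$), or $\phi(t;1/4)$ or $\phi(t;-1/4)$ (when it is $\mp i$); likewise $\sum_{m\ge1}x^m m^{-t}$ becomes $\zeta(t)$, $\phi(t)$, $\phi(t;1/4)$ or $\phi(t;-1/4)$ according to whether $x$ is $1,-1,i$ or $-i$. This is the place where \eqref{pl+i}–\eqref{pl-i} and the definition of $\varLambda(s;\pm i)$ enter; no new analytic input is needed, only bookkeeping of which root of unity multiplies the summation index.

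Third, for the substitutions with $\theta\neq 0$ — that is, $\theta=\pm\pi/2$ — the right-hand side of \eqref{eq-5-6} still carries the sums $\sum_{\xi=0}^{j}(\cdots)(i\theta)^\xi/\xi!$ over $\xi\ge 1$, which must be collapsed. This is precisely what Lemma \ref{L-6-1} is designed for: applying \eqref{ee-6-4} with $\theta=\pi/2$ (i.e. $c=3\pi/2$, so $-i\pi/2=i(c-\pi)$ matches the exponent, giving $\varLambda(\xi;i)$) and \eqref{ee-6-5} with $\theta=-\pi/2$ (i.e. $c=\pi/2$, giving $\varLambda(\xi;-i)$), with $h$ chosen to be the appropriate shifted zeta- or Lerch-value function, rewrites the double $\xi$-sum on the right of \eqref{eq-5-6} as a single sum $\sum_{l=0}^{p}\varLambda(l;\pm i)(\cdots)$ — exactly the shape of the right-hand sides of \eqref{eq-MT-i-1}, \eqref{eq-MT-i+i}, \eqref{eq-MT+i-1} and \eqref{eq-MT-i+i}. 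For the two substitutions with $\theta=0$, namely $(x,\theta)=(-i,0)$ and $(i,0)$, every $\xi\ge1$ term already vanishes and one gets directly the Euler-factor form \eqref{eq-MT-1+i} and \eqref{eq-MT-1-i} with $\phi(2k)$ and half-integer-shifted Lerch values, just as $(\pm1,0)$ gave \eqref{eq-MT-l} and \eqref{eq-MT-lm} in Corollary \ref{L-MT}.

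The main obstacle is purely organizational rather than conceptual: one must be careful that replacing $x$ by, say, $-i$ in \eqref{eq-5-6} means the literal series variable $x$ there becomes $-i$, so $(-1)^{l+m}x^m e^{i(l+m)\theta} = (-1)^l\,((-1)e^{i\theta})^m (-i)^m$, and in the $\theta=\pi/2$ cases one must track that $(-1)e^{i\pi/2}=-i$ combines with the extra $x^m$ factor correctly, and similarly that the sign $(-1)^l$ on the $l$-summation produces the factor $(-1)^l$ attached to $\varLambda(l;\pm i)$ in half of the formulas. I would verify each of the six cases separately against \eqref{ee-6-2}–\eqref{ee-6-3} to make sure the parity conventions on $\varLambda$ are consistent, and then remark that the convergence of all series on both sides for $s>1$ is inherited from Lemma \ref{Lem-5-1} and the absolute convergence of $\mathfrak{T}(p,s,q;x,y)$ when $p,q\ge 1$, $s>1$, $|x|,|y|\le 1$.
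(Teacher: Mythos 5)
Your proposal is correct and follows exactly the paper's own route: the paper proves Lemma \ref{L-6-2} by setting $(x,\theta)=(-i,0),\,(-i,\pi/2),\,(-1,\pi/2),\,(i,0),\,(i,-\pi/2),\,(-1,-\pi/2)$ in \eqref{eq-5-6} and invoking Lemma \ref{L-6-1}, "by the same method as in the proof of Corollary \ref{L-MT}" --- precisely the substitutions and the mechanism you describe. Your additional bookkeeping remarks about tracking the roots of unity and the $(-1)^l$ factors are consistent with, and merely flesh out, the paper's terse argument.
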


Setting $(x,y,\theta)=(-i,i,\pi/2)$ and $(i,-i,-\pi/2)$ in \eqref{equ-6-1}, and using Lemma \ref{L-6-1}, we obtain the following.

\begin{theorem}\label{T-6-3} 
For $p,q,a,b,c\in \mathbb{N}$, 
\begin{align}
& {\zeta}_3((p,q,s,a,b,c),\lambda_1^\vee;SU(4))
+(-1)^p{\zeta}_3((p,a,s,q,c,b),\lambda_1^\vee;SU(4))\label{Fq-lambda1}\\
& +(-1)^{p+a}{\zeta}_3((q,a,c,p,s,b),\lambda_1^\vee;SU(4))
+(-1)^{p+a+c}{\zeta}_3((q,s,c,b,a,p),\lambda_1^\vee;SU(4)) \notag\\
& +(-1)^{q}{\zeta}_3((a,q,b,p,s,c),\lambda_1^\vee;SU(4))
+(-1)^{q+b}{\zeta}_3((a,s,b,c,q,p),\lambda_1^\vee;SU(4)) \notag\\
& +(-1)^{q+a}{\zeta}_3((a,p,b,q,c,s),\lambda_1^\vee;SU(4)) 
+(-1)^{q+a+b}{\zeta}_3((a,c,b,s,p,q),\lambda_1^\vee;SU(4))\notag\\
&+(-1)^{q+a+b+c}{\zeta}_3((s,c,p,a,b,q),\lambda_1^\vee;SU(4)) 
+(-1)^{p+q+a}{\zeta}_3((q,p,c,a,b,s),\lambda_1^\vee;SU(4))\notag\\
&+(-1)^{p+q+a+c}{\zeta}_3((q,b,c,s,p,a),\lambda_1^\vee;SU(4))\notag\\
&+(-1)^{p+q+a+b+c}{\zeta}_3((s,b,p,q,c,a),\lambda_1^\vee;SU(4))\notag\\
& \quad =(-1)^p \sum_{j=0}^{p}\ \varLambda(j;-i)(-1)^j \sum_{\omega=0}^{p-j}\binom{\omega+a-1}{\omega} \binom{p+c-j-\omega-1}{c-1}\notag\\
& \qquad \quad \times \mathfrak{T}(q+a+\omega, s,p+b+c-j-\omega;-1,i) \notag\\
& \quad +(-1)^{p+a}\sum_{j=0}^{c}\ \varLambda(j;-i)\sum_{\omega=0}^{c-j}\binom{\omega+a-1}{\omega}\binom{p+c-j-\omega-1}{p-1}\notag\\
& \qquad \quad \times \mathfrak{T}(q,s+a+\omega,p+b+c-j-\omega;-i,-1)\notag\\
& \quad +(-1)^p\sum_{j=0}^{a}\ \varLambda(j;-i)\sum_{\omega=0}^{p-1}\binom{\omega+a-j}{\omega}\binom{p+c-2-\omega}{c-1} \notag\\
& \qquad \quad \times \mathfrak{T}(q+a-j+\omega+1,s,p+b+c-1-\omega;-i,i) \notag\\
& \quad +(-1)^{p+a}\sum_{j=0}^{a}\ \varLambda(j;-i)(-1)^j \sum_{\omega=0}^{c-1}\binom{\omega+a-j}{\omega}\binom{p+c-2-\omega}{p-1}\notag\\
& \qquad \quad \times \mathfrak{T}(q,s+a-j+\omega+1,p+b+c-1-\omega;-i,i)\notag
\end{align}
and 
\begin{align}
& {\zeta}_3((p,q,s,a,b,c),\lambda_3^\vee;SU(4))
+(-1)^p{\zeta}_3((p,a,s,q,c,b),\lambda_3^\vee;SU(4))\label{Fq-lambda3}\\
& +(-1)^{p+a}{\zeta}_3((q,a,c,p,s,b),\lambda_3^\vee;SU(4))
+(-1)^{p+a+c}{\zeta}_3((q,s,c,b,a,p),\lambda_3^\vee;SU(4)) \notag\\
& +(-1)^{q}{\zeta}_3((a,q,b,p,s,c),\lambda_3^\vee;SU(4))
+(-1)^{q+b}{\zeta}_3((a,s,b,c,q,p),\lambda_3^\vee;SU(4)) \notag\\
& +(-1)^{q+a}{\zeta}_3((a,p,b,q,c,s),\lambda_3^\vee;SU(4)) 
+(-1)^{q+a+b}{\zeta}_3((a,c,b,s,p,q),\lambda_3^\vee;SU(4))\notag\\
&+(-1)^{q+a+b+c}{\zeta}_3((s,c,p,a,b,q),\lambda_3^\vee;SU(4)) 
+(-1)^{p+q+a}{\zeta}_3((q,p,c,a,b,s),\lambda_3^\vee;SU(4))\notag\\
&+(-1)^{p+q+a+c}{\zeta}_3((q,b,c,s,p,a),\lambda_3^\vee;SU(4))\notag\\
&+(-1)^{p+q+a+b+c}{\zeta}_3((s,b,p,q,c,a),\lambda_3^\vee;SU(4))\notag\\
& \quad =(-1)^p \sum_{j=0}^{p}\ \varLambda(j;i)(-1)^j \sum_{\omega=0}^{p-j}\binom{\omega+a-1}{\omega} \binom{p+c-j-\omega-1}{c-1}\notag\\
& \qquad \quad \times \mathfrak{T}(q+a+\omega, s,p+b+c-j-\omega;-1,-i) \notag\\
& \quad +(-1)^{p+a}\sum_{j=0}^{c}\ \varLambda(j;i)\sum_{\omega=0}^{c-j}\binom{\omega+a-1}{\omega}\binom{p+c-j-\omega-1}{p-1}\notag\\
& \qquad \quad \times \mathfrak{T}(q,s+a+\omega,p+b+c-j-\omega;i,-1)\notag\\
& \quad +(-1)^p\sum_{j=0}^{a}\ \varLambda(j;i)\sum_{\omega=0}^{p-1}\binom{\omega+a-j}{\omega}\binom{p+c-2-\omega}{c-1} \notag\\
& \qquad \quad \times \mathfrak{T}(q+a-j+\omega+1,s,p+b+c-1-\omega;i,-i) \notag\\
& \quad +(-1)^{p+a}\sum_{j=0}^{a}\ \varLambda(j;i)(-1)^j \sum_{\omega=0}^{c-1}\binom{\omega+a-j}{\omega}\binom{p+c-2-\omega}{p-1}\notag\\
& \qquad \quad \times \mathfrak{T}(q,s+a-j+\omega+1,p+b+c-1-\omega;i,-i)\notag
\end{align}
hold for $s \in \mathbb{C}$ except for singularities of functions on both sides.
Moreover, since $\varLambda(j;\pm i)$ and $\mathfrak{T}(p,s,q;x,y)$ satisfy 
\eqref{ee-6-2}-\eqref{ee-6-3} and \eqref{eq-MT-1+i}-\eqref{eq-MT-i+i}, respectively,
we find that the right-hand sides of \eqref{Fq-lambda1} and \eqref{Fq-lambda3}
can be written in terms of $\zeta(s)$ and $L(s,\chi_4)$.
\end{theorem}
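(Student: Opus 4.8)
The plan is to derive both \eqref{Fq-lambda1} and \eqref{Fq-lambda3} by specializing the master identity \eqref{equ-6-1}, in exactly the way Theorems \ref{P-FR-A_3} and \ref{P-FR-A_3-2} were obtained from it, but now choosing the values of $(x,y,\theta)$ that convert the left-hand side into combinations of the $\lambda_1^\vee$- and $\lambda_3^\vee$-twisted zeta-functions. For \eqref{Fq-lambda1} I would set $(x,y,\theta)=(-i,i,\pi/2)$, and for \eqref{Fq-lambda3} the conjugate choice $(x,y,\theta)=(i,-i,-\pi/2)$; the two computations are mirror images of each other, and their outputs are interchanged by \eqref{change-1} together with $i\mapsto -i$, which is a useful consistency check, so it suffices to carry out the first in detail.

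First I would rewrite the left-hand side of \eqref{equ-6-1} at $(x,y,\theta)=(-i,i,\pi/2)$. Using $e^{i\pi/2}=i$ one checks that the coefficient $(-1)^{l+m}(-i)^{m}i^{n}e^{i(l+m)\pi/2}$ equals $(-i)^{l}(-1)^{m}i^{n}=i^{3l+2m+n}$, which is precisely the character appearing in \eqref{A3-lam1}. Hence the left-hand side of \eqref{equ-6-1} is a sum of the shape of $\zeta_3(\cdot,\lambda_1^\vee;SU(4))$, but taken over the region $l,m\neq 0$, $n\geq 1$, $l+m\neq 0$, $m+n\neq 0$, $l+m+n\neq 0$. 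I would then decompose this region into the sub-cones on which each of $l$, $m$, $l+m$, $l+m+n$ has a fixed sign, following \cite[p.\,160]{KMTJC}; on each sub-cone a reflection of the relevant summation variables brings the sum into the standard form \eqref{zeta-A3-P}, and tracking how the factor $i^{3l+2m+n}$ transforms under each such reflection produces exactly the twelve terms $\zeta_3(\cdot,\lambda_1^\vee;SU(4))$ on the left of \eqref{Fq-lambda1}, each with its permutation of $(p,q,s,a,b,c)$ and its sign $(-1)^{\cdots}$. This sign-and-permutation bookkeeping --- identical in structure to the $A_3$ case but now carrying the twist $\lambda_1^\vee$ through the reflections --- is the step demanding the most care, and is where I expect the main obstacle to lie.

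Next I would treat the right-hand side of \eqref{equ-6-1}. Each of its four blocks has the form $\sum_{k}\phi(\,\cdot-k)\varepsilon_{\,\cdot-k}\sum_{\xi=0}^{k}h(k-\xi)(i\theta)^{\xi}/\xi!$, once the inner double series together with the binomial factors depending on $k-\xi$ is packaged into a function $h$; at $\theta=\pi/2$ this is precisely the left-hand side of \eqref{ee-6-5}, so Lemma \ref{L-6-1} collapses each block to $\tfrac12\sum_{\xi}\varLambda(\xi;-i)h(\,\cdot-\xi)$. This accounts simultaneously for the weights $\varLambda(j;-i)$ and for the disappearance of the explicit factor $2$ present in \eqref{Fq-SU4}. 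Inside $h$, the inner sum $\sum_{m\neq 0,\,n\geq 1,\,m+n\neq 0}$ splits into its $m>0$ and $m<0$ parts, and after substituting $x=-i$, $y=i$ and using $e^{im\pi/2}=i^{m}$ one identifies these double series with the values $\mathfrak{T}(\cdot,s,\cdot;-1,i)$, $\mathfrak{T}(\cdot,\cdot,\cdot;-i,-1)$ and $\mathfrak{T}(\cdot,\cdot,\cdot;-i,i)$ displayed on the right of \eqref{Fq-lambda1}. Relabelling the summation indices (writing $j$ for $p-\xi$, and so on) puts everything into the stated form. Running the same computation with $(x,y,\theta)=(i,-i,-\pi/2)$, where the relevant character on the left is $i^{l+2m+3n}$ as in \eqref{A3-lam3} and where \eqref{ee-6-4} replaces \eqref{ee-6-5} (hence $\varLambda(j;i)$ replaces $\varLambda(j;-i)$), yields \eqref{Fq-lambda3}.

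Two minor technical points remain. First, \eqref{equ-6-1} was derived under $p\geq 2$; the boundary case $p=1$ is handled exactly as in Lemma \ref{Lem-5-1} and in the derivation of \eqref{equ-6-1} itself, by starting from $p=2$ and differentiating in $\theta$. Second, the ``moreover'' assertion follows at once: $\varLambda(j;\pm i)$ is given by \eqref{ee-6-2}--\eqref{ee-6-3}, each $\mathfrak{T}$-value by \eqref{eq-MT-1+i}--\eqref{eq-MT-i+i}, and since $\phi(s)=(2^{1-s}-1)\zeta(s)$ while $\phi(s;\pm 1/4)$ are, by \eqref{pl+i}--\eqref{pl-i}, explicit combinations of $\zeta(s)$ and $L(s,\chi_4)$ with coefficients in $\mathbb{Q}[2^{-s}]$, the right-hand sides of \eqref{Fq-lambda1} and \eqref{Fq-lambda3} are thereby expressed through $\zeta(s)$ and $L(s,\chi_4)$ alone. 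Apart from the bookkeeping on the left-hand side, the entire argument is a transcription of \cite[Section 7]{KMTJC}.
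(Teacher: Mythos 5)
Your proposal coincides with the paper's own proof: the authors obtain \eqref{Fq-lambda1} and \eqref{Fq-lambda3} precisely by setting $(x,y,\theta)=(-i,i,\pi/2)$ and $(i,-i,-\pi/2)$ in \eqref{equ-6-1} and applying Lemma \ref{L-6-1}, which is exactly your plan, and your character computation $(-1)^{l+m}(-i)^m i^n e^{i(l+m)\pi/2}=i^{3l+2m+n}$ together with the identification of the inner double series as $\mathfrak{T}(\cdot;-1,i)$, $\mathfrak{T}(\cdot;-i,-1)$, $\mathfrak{T}(\cdot;-i,i)$ is correct. The additional details you supply (the cone decomposition of the left-hand side following \cite[p.\,160]{KMTJC}, the $p=1$ case by differentiation, and the reduction of the right-hand side to $\zeta(s)$ and $L(s,\chi_4)$) are the same steps the paper delegates to its earlier sections and to \cite{KMTJC}.
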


Setting $(a,b,c,p,q,s)=(2k,2k,2k,2k,2k,2k)$ for $k\in \mathbb{N}$, we obtain 
$${\zeta}_3((2k,2k,2k,2k,2k,2k),\lambda_j^\vee;SU(4))\in \mathbb{Q}\cdot \pi^{12k}\quad (j=1,3),$$
because, since $\chi_4$ is an odd character, 
$L(2l+1,\chi_4)\in\mathbb{Q}\cdot\pi^{2l+1}$
(see, for example, \cite[p.12]{Iwa}).
This is again an example of \cite[Theorem 3.2]{KMT-Lie}. 

Now we note that
\begin{align}
1-i^{3l+2m+n}+(-1)^{l+n}-i^{l+2m+3n}=\left\{
\begin{array}{ll}
  4 & {\rm if}\; l+2m+3n\equiv 2 \; {\rm (mod\; 4)},\\
  0 & {\rm otherwise.}
\end{array}
\right.
\end{align}
This is because $(-1)^{l+n}=1$ and $i^{3l+2m+n}=i^{l+2m+3n}$ when $l$ and $n$ are
both even or both odd, while $(-1)^{l+n}=-1$ and $i^{3l+2m+n}=-i^{l+2m+3n}$  
otherwise.   Therefore
\begin{align}
& \zeta_3({\bf s},\{{\bf 0}\};PU(4)) \label{relation}\\
& =\frac{1}{4}\bigg( \zeta_3({\bf s},A_3)-\zeta_3({\bf s},\lambda_1^\vee;SU(4))+\zeta_3({\bf s},\lambda_2^\vee;SU(4))-\zeta_3({\bf s},\lambda_3^\vee;SU(4))\bigg),\notag
\end{align}
which is further equal to
\begin{align*}
 \frac{1}{4}\bigg( 2\zeta_3({\bf s},SO(6))-\zeta_3({\bf s},\lambda_1^\vee;SU(4))-\zeta_3({\bf s},\lambda_3^\vee;SU(4))\bigg)
\end{align*}
by \eqref{rel-P-L}. Hence it follows from \eqref{change-1} and \eqref{change-2} that 
\begin{equation}
\zeta_3((s_1,s_2,s_3,s_4,s_5,s_6),\{{\bf 0}\};PU(4))=\zeta_3((s_3,s_2,s_1,s_5,s_4,s_6),\{{\bf 0}\};PU(4)). \label{relation-2}
\end{equation}
Using \eqref{relation} and combining Theorem \ref{T-SO6} and Theorem \ref{T-6-3}, we obtain the following functional relation 
among $\zeta_3({\bf s},\{{\bf 0}\};PU(4))$, $\zeta(s)$ and $L(s,\chi_4)$. 

\begin{theorem}\label{T-6-4} 
For $p,q,a,b,c\in \mathbb{N}$, 
\begin{align}
& {\zeta}_3((p,q,s,a,b,c),{\bf 0};PU(4))
+(-1)^p{\zeta}_3((p,a,s,q,c,b),{\bf 0};PU(4))\label{Fq-PU4}\\
& +(-1)^{p+a}{\zeta}_3((q,a,c,p,s,b),{\bf 0};PU(4))
+(-1)^{p+a+c}{\zeta}_3((q,s,c,b,a,p),{\bf 0};PU(4)) \notag\\
& +(-1)^{q}{\zeta}_3((a,q,b,p,s,c),{\bf 0};PU(4))
+(-1)^{q+b}{\zeta}_3((a,s,b,c,q,p),{\bf 0};PU(4)) \notag\\
& +(-1)^{q+a}{\zeta}_3((a,p,b,q,c,s),{\bf 0};PU(4)) 
+(-1)^{q+a+b}{\zeta}_3((a,c,b,s,p,q),{\bf 0};PU(4))\notag\\
&+(-1)^{q+a+b+c}{\zeta}_3((s,c,p,a,b,q),{\bf 0};PU(4)) 
+(-1)^{p+q+a}{\zeta}_3((q,p,c,a,b,s),{\bf 0};PU(4))\notag\\
&+(-1)^{p+q+a+c}{\zeta}_3((q,b,c,s,p,a),{\bf 0};PU(4))\notag\\
&+(-1)^{p+q+a+b+c}{\zeta}_3((s,b,p,q,c,a),{\bf 0};PU(4))\notag\\
& = \frac{1}{4}\left(J_0-J_1+J_2-J_3\right),\notag 
\end{align}
where $J_0,J_1,J_2,J_3$ are the right-hand sides of \eqref{Fq-A3}, \eqref{Fq-lambda1}, \eqref{Fq-SU4}, \eqref{Fq-lambda3}, respectively.
\end{theorem}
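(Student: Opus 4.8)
The plan is to derive the functional relation for $\zeta_3(\mathbf{s},\mathbf{0};PU(4))$ purely formally from the decomposition \eqref{relation}, which expresses $\zeta_3(\mathbf{s},\mathbf{0};PU(4))$ as the linear combination
\[
\zeta_3(\mathbf{s},\mathbf{0};PU(4))=\frac14\bigl(\zeta_3(\mathbf{s};A_3)-\zeta_3(\mathbf{s},\lambda_1^\vee;SU(4))+\zeta_3(\mathbf{s},\lambda_2^\vee;SU(4))-\zeta_3(\mathbf{s},\lambda_3^\vee;SU(4))\bigr),
\]
of the four zeta-functions for which functional relations are already in hand: Theorem \ref{P-FR-A_3-2} for $\zeta_3(\mathbf{s};A_3)$, Theorem \ref{T-6-3} for $\zeta_3(\mathbf{s},\lambda_1^\vee;SU(4))$ and $\zeta_3(\mathbf{s},\lambda_3^\vee;SU(4))$, and Theorem \ref{P-FR-A_3} for $\zeta_3(\mathbf{s},\lambda_2^\vee;SU(4))$. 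The crucial structural observation is that all four of these functional relations have \emph{the same} left-hand side as a formal linear combination of twelve zeta-values, with identical integer indices and identical signs $(-1)^p,(-1)^{p+a},\ldots$ in front of each term — only the ``$G$'' (or the character $\lambda_j^\vee$) label differs.

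**First I would** take the $A_3$-relation \eqref{Fq-A3}, subtract the $\lambda_1^\vee$-relation \eqref{Fq-lambda1}, add the $\lambda_2^\vee$-relation \eqref{Fq-SU4}, and subtract the $\lambda_3^\vee$-relation \eqref{Fq-lambda3}, then divide by $4$. On the left-hand side, for each of the twelve index patterns the coefficient is the same sign $\epsilon\in\{\pm1\}$ and the combination produced is exactly
\[
\tfrac14\epsilon\bigl(\zeta_3(\cdot\,;A_3)-\zeta_3(\cdot\,,\lambda_1^\vee;SU(4))+\zeta_3(\cdot\,,\lambda_2^\vee;SU(4))-\zeta_3(\cdot\,,\lambda_3^\vee;SU(4))\bigr)=\epsilon\,\zeta_3(\cdot\,,\mathbf{0};PU(4)),
\]
by \eqref{relation} applied with the permuted argument. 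This identifies the left-hand side of \eqref{Fq-PU4} termwise. On the right-hand side, by definition $\tfrac14(J_0-J_1+J_2-J_3)$ is precisely the stated combination of the four right-hand sides of \eqref{Fq-A3}, \eqref{Fq-lambda1}, \eqref{Fq-SU4}, \eqref{Fq-lambda3}, so there is nothing further to compute there — the theorem as stated leaves it in the form $\tfrac14(J_0-J_1+J_2-J_3)$.

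**The only genuine point to check** — and what I expect to be the main (though mild) obstacle — is the \emph{bookkeeping of signs}: one must verify that the coefficient of each of the twelve zeta-terms on the left of \eqref{Fq-A3}, \eqref{Fq-SU4}, \eqref{Fq-lambda1}, \eqref{Fq-lambda3} really is the same $(-1)^{\bullet}$, so that the linear combination factors through $\zeta_3(\cdot,\mathbf{0};PU(4))$ cleanly with no cross terms. Inspection of the four theorem statements shows this is the case: each left-hand side is literally the same list of twelve terms with the same sign prefactors $1,(-1)^p,(-1)^{p+a},(-1)^{p+a+c},(-1)^q,\ldots,(-1)^{p+q+a+b+c}$ attached to the same index tuples $(p,q,s,a,b,c),(p,a,s,q,c,b),\ldots$. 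Hence the argument is a one-line application of linearity of $\zeta_3$ in the summand together with \eqref{relation} and \eqref{relation-2}. I would also remark, as the statement does, that since the right-hand sides $J_0,J_2$ are expressible via $\zeta(s)$ (through \eqref{eq-MT-00}–\eqref{eq-MT-lm}) and $J_1,J_3$ via $\zeta(s)$ and $L(s,\chi_4)$ (through \eqref{ee-6-2}–\eqref{ee-6-3} and \eqref{eq-MT-1+i}–\eqref{eq-MT-i+i}), the combination $\tfrac14(J_0-J_1+J_2-J_3)$ is a concrete functional relation among $\zeta_3(\mathbf{s},\mathbf{0};PU(4))$, $\zeta(s)$ and $L(s,\chi_4)$; this requires no new work beyond quoting those earlier formulas.
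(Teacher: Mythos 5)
Your proposal is correct and is essentially the paper's own argument: the paper likewise obtains \eqref{Fq-PU4} by taking the linear combination dictated by \eqref{relation} (phrased equivalently as $\tfrac14(2\,\zeta_3(\cdot;SO(6))-\zeta_3(\cdot,\lambda_1^\vee;SU(4))-\zeta_3(\cdot,\lambda_3^\vee;SU(4)))$, combining Theorem \ref{T-SO6} with Theorem \ref{T-6-3}, which is the same as your direct $\tfrac14(J_0-J_1+J_2-J_3)$ since the right-hand side of \eqref{Fq-SO6} is $\tfrac12(J_0+J_2)$). Your identification of the key point --- that all four relations share the identical twelve-term left-hand side with identical sign prefactors, so the combination factors termwise through $\zeta_3(\cdot,{\bf 0};PU(4))$ --- is exactly what makes the paper's one-line derivation valid.
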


Setting $(a,b,c,p,q,s)=(2k,2k,2k,2k,2k,2k)$ for $k\in \mathbb{N}$, we obtain 
$${\zeta}_3((2k,2k,2k,2k,2k,2k),{\bf 0};PU(4))\in \mathbb{Q}\cdot \pi^{12k}.$$
Also, set $(a,b,c,p,q,s)=(2k+1,2k+1,2k+1,2k+1,2k,2k+1)$ in \eqref{Fq-lambda1}, \eqref{Fq-lambda3} and \eqref{Fq-PU4}. 
Then, by the same method as in the proof of Proposition \ref{P-new}, we obtain the following. 

\begin{proposition}\label{P-new-2}
For $k\in \mathbb{N}$ and $j=1,3$, 
\begin{align*}
& {\zeta}_3((2k,2k+1,2k+1,2k+1,2k+1,2k+1),\lambda_j^\vee;SU(4)) \\
& \qquad \in \mathbb{Q}[\{\zeta(j+1),\,L(j,\chi_4)\,|\,j\in \mathbb{N}\}],
\end{align*}
and so
\begin{align*}
& {\zeta}_3((2k,2k+1,2k+1,2k+1,2k+1,2k+1),\{{\bf 0}\};PU(4)) \\
& \qquad \in \mathbb{Q}[\{\zeta(j+1),\,L(j;\chi_4)\,|\,j\in \mathbb{N}\}],
\end{align*}
where the rational coefficients can be determined explicitly.
\end{proposition}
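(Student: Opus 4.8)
The plan is to follow the template already used for Proposition~\ref{P-new}, now applied to the three functional relations \eqref{Fq-lambda1}, \eqref{Fq-lambda3} and \eqref{Fq-PU4}. First I would substitute $(a,b,c,p,q,s)=(2k+1,2k+1,2k+1,2k+1,2k,2k+1)$ into each of these relations. On the left-hand side, the twelve terms collapse in exactly the same way as in the derivation preceding Proposition~\ref{P-new}: using the elementary identity
\begin{equation*}
\frac{1}{l^{2k+1}m^{2k}(l+m)^{2k+1}}-\frac{1}{l^{2k+1}m^{2k+1}(l+m)^{2k}}=-\frac{1}{l^{2k}m^{2k+1}(l+m)^{2k+1}},
\end{equation*}
together with the symmetry relations \eqref{change-1}, \eqref{change-2} (and, for the $PU(4)$ case, \eqref{relation-2}), the left-hand side reduces to a nonzero rational multiple of the single target zeta-value ${\zeta}_3((2k,2k+1,2k+1,2k+1,2k+1,2k+1),\lambda_j^\vee;SU(4))$, respectively ${\zeta}_3((2k,2k+1,2k+1,2k+1,2k+1,2k+1),\{{\bf 0}\};PU(4))$.

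Next I would analyze the right-hand side. For \eqref{Fq-lambda1} and \eqref{Fq-lambda3}, the right-hand side is built from $\varLambda(j;\mp i)$ and from the quantities $\mathfrak{T}(\cdot,\cdot,\cdot;x,y)$ with $x,y\in\{\pm1,\pm i\}$ occurring there. By \eqref{ee-6-2}--\eqref{ee-6-3}, each $\varLambda(j;\pm i)$ lies in $\mathbb{Q}\cdot\pi^{2j}$ when $j$ is even (hence in $\mathbb{Q}[\zeta(2),\zeta(4),\dots]$, using $\zeta(2\nu)\in\mathbb{Q}\cdot\pi^{2\nu}$) and in $\mathbb{Q}\cdot iL(2l+1,\chi_4)$ when $j=2l+1$ is odd. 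By Corollary~\ref{L-MT} and Lemma~\ref{L-6-2}, each $\mathfrak{T}(p,s,q;x,y)$ with the relevant $x,y$ is a $\mathbb{Q}$-linear combination of products $\zeta(2k)\,\zeta(\cdot)$, $\zeta(2k)\,\phi(\cdot)$, $\phi(2k)\,\phi(\cdot)$, $\varLambda(l;\pm i)\,\phi(\cdot)$, $\varLambda(l;\pm i)\,\phi(\cdot;\pm 1/4)$, and the arguments arising here are all positive integers (because $s=2k+1$ and the shifts are nonnegative integers, so the smallest argument $s+p+q-2k=2+\cdots\ge 3>1$, well away from the pole of $\zeta$). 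Since $\phi(n)=(2^{1-n}-1)\zeta(n)$ and, by \eqref{pl+i}--\eqref{pl-i}, $\phi(n;\pm 1/4)=2^{-n}(2^{1-n}-1)\zeta(n)\pm iL(n,\chi_4)$, every such product expands into a $\mathbb{Q}$-linear combination of $\zeta(a)\zeta(b)$, $\zeta(a)L(b,\chi_4)$, and (from the imaginary cross terms) $L(a,\chi_4)L(b,\chi_4)$, with all integer arguments $\ge 2$ for the $\zeta$'s and $\ge 1$ for the $L$'s. Finally, one checks that the factors of $i$ pair up so that the whole right-hand side is real; parity bookkeeping in the exponents of $\chi_4$ (an odd character) guarantees that $L$-values enter only in even products or with a compensating $i$, so each right-hand side lies in $\mathbb{Q}[\{\zeta(j+1),L(j,\chi_4)\mid j\in\mathbb{N}\}]$. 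Dividing by the nonzero rational constant from the left-hand side gives the first assertion for $j=1,3$. The statement for $PU(4)$ then follows by inserting these, together with the already-established membership for $\lambda_2^\vee$ (Proposition~\ref{P-new}) and for $A_3$, into the decomposition \eqref{relation}; that the $\zeta$-only contributions from $A_3$ and $\lambda_2^\vee$ are a special case of $\mathbb{Q}[\{\zeta(j+1),L(j,\chi_4)\}]$ is immediate.

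The main obstacle is the reality/tracking bookkeeping: one must verify carefully that all the imaginary units from $\varLambda(l;\pm i)$ and from $\phi(\cdot;\pm 1/4)$ cancel in each of the three right-hand sides, so that no spurious $iL(a,\chi_4)$ or $i\zeta(a)L(b,\chi_4)$ survives and the claimed real rational-polynomial membership actually holds. This is a finite check organized by the parity of the exponents $p,q,a,b,c$ under the chosen specialization; the structural reason it works is that $\chi_4$ is odd, so $L(2l+1,\chi_4)/\pi^{2l+1}\in\mathbb{Q}$ while $L(2l,\chi_4)=0$, matching exactly the way odd and even indices are separated in \eqref{ee-6-2}--\eqref{ee-6-3}. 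Everything else — the collapse of the left-hand side, the substitutions from Corollary~\ref{L-MT} and Lemma~\ref{L-6-2}, and the final linear combination \eqref{relation} — is routine, and the explicit computability of the rational coefficients is clear since every step is an effective finite manipulation.
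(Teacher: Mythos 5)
Your strategy is the paper's own: substitute $(a,b,c,p,q,s)=(2k+1,2k+1,2k+1,2k+1,2k,2k+1)$ into \eqref{Fq-lambda1}, \eqref{Fq-lambda3} and \eqref{Fq-PU4}, collapse the left-hand sides as in Proposition \ref{P-new}, and evaluate the right-hand sides via Lemma \ref{L-6-2} together with \eqref{pl+i}--\eqref{ee-6-3}. There is, however, a genuine gap in your claim that for $j=1,3$ the left-hand side ``reduces to a nonzero rational multiple of the single target zeta-value.'' Carrying out the collapse of \eqref{Fq-lambda1}, the twelve terms reduce (exactly as in the $\lambda_2^\vee$ case) to $2A-B-C$ with
\begin{align*}
A&=\zeta_3((2k{+}1,2k,2k{+}1,2k{+}1,2k{+}1,2k{+}1),\lambda_1^\vee;SU(4)),\\
B&=\zeta_3((2k{+}1,2k{+}1,2k{+}1,2k,2k{+}1,2k{+}1),\lambda_1^\vee;SU(4)),\\
C&=\zeta_3((2k{+}1,2k{+}1,2k{+}1,2k{+}1,2k,2k{+}1),\lambda_1^\vee;SU(4)),
\end{align*}
and the partial-fraction identity applied to $(l,m,l+m)$ gives $A-B=-\zeta_3((2k,2k{+}1,\dots),\lambda_1^\vee;SU(4))$, while applied to $(n,m,m+n)$ it gives $A-C=-\zeta_3((2k{+}1,2k{+}1,2k,2k{+}1,2k{+}1,2k{+}1),\lambda_1^\vee;SU(4))$, which by \eqref{change-1} equals $-\zeta_3((2k,2k{+}1,\dots),\lambda_3^\vee;SU(4))$. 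The point is that \eqref{change-1}, unlike \eqref{change-2} and \eqref{change-0} used for Proposition \ref{P-new}, \emph{interchanges} $\lambda_1^\vee$ and $\lambda_3^\vee$ rather than fixing each, so the left-hand side collapses to $-\bigl(\zeta_3(\cdot,\lambda_1^\vee)+\zeta_3(\cdot,\lambda_3^\vee)\bigr)$; and \eqref{Fq-lambda3} produces this same combination, so it is not an independent equation. This establishes membership in $\mathbb{Q}[\{\zeta(j+1),L(j,\chi_4)\}]$ for the \emph{sum} over $j\in\{1,3\}$, which is all that the $PU(4)$ assertion needs via \eqref{relation}; but the assertion for each $j$ separately additionally requires $\zeta_3(\cdot,\lambda_1^\vee)=\zeta_3(\cdot,\lambda_3^\vee)$ at these arguments, i.e.\ that these two mutually complex-conjugate values are real. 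That equality is used in Example \ref{Ex-6-2} but is not delivered by your argument (nor proved explicitly in the paper), and it is exactly the missing ingredient.

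A smaller correction: $L(2l,\chi_4)\neq 0$ for $l\geq 1$ (these are Catalan-type constants; the trivial zeros of $L(s,\chi_4)$ lie at negative odd integers), and $L$-values at even positive arguments genuinely appear in the final formulas, e.g.\ $L(10,\chi_4)$, $L(12,\chi_4)$ in Example \ref{Ex-6-2}. This does not damage the conclusion, since the target ring contains all $L(j,\chi_4)$ with $j\in\mathbb{N}$, but your ``structural reason'' for the cancellation of the factors of $i$ is therefore not the right one. The clean argument is that the collapsed left-hand side equals $-2\,\mathrm{Re}\,\zeta_3(\cdot,\lambda_1^\vee;SU(4))$ and is hence real, forcing the right-hand side to be real, so after expanding via \eqref{pl+i}--\eqref{pl-i} and \eqref{ee-6-2}--\eqref{ee-6-3} the purely imaginary contributions must cancel identically.
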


\begin{example} \label{Ex-6-1} \rm 
Setting $(a,b,c,p,q,s)=(2,2,2,2,2,s)$ in \eqref{Fq-lambda1} and \eqref{Fq-lambda3}, 
%and using \eqref{relation-2}, 
and using the data 
$L(1,\chi_4) = \pi/4,\;L(3,\chi_4) = \pi^3/32,\;L(5,\chi_4) = (5/1536)\pi^5$,
we obtain
\begin{align*}
& 2\bigg\{\zeta_3((s,2,2,2,2,2),\lambda_1^\vee;SU(4))+\zeta_3((2,s,2,2,2,2),\lambda_1^\vee;SU(4)) \\
& \quad +\zeta_3((2,2,s,2,2,2),\lambda_1^\vee;SU(4)) +\zeta_3((2,2,2,s,2,2),\lambda_1^\vee;SU(4)) \notag\\
& \quad +\zeta_3((2,2,2,2,s,2),\lambda_1^\vee;SU(4)) +\zeta_3((2,2,2,2,2,s),\lambda_1^\vee;SU(4))\bigg\} \\
& = \left(372\cdot 2^{-s-10} + 306\right)\left(2^{-s-9}-1\right)
\zeta(s+10)+100\pi L(s+9,\chi_4)  \notag\\
& \quad +\left(7\cdot 2^{-s-8} + \frac{32}{3}\right)\left(2^{-s-7}-1\right)\pi^2\zeta(s+8)+ \frac{17}{6}\pi^3 L(s+7,\chi_4)   \notag\\
& \quad + \left(\frac{113\cdot 2^{-s-6}}{1440}+ \frac{1}{288}\right)\left(2^{-s-5}-1\right)\pi^4 \zeta(s+6) \notag\\
& \quad + \frac{1}{32}\pi^5 L(s+5,\chi_4)+\frac{289\cdot 2^{-s-4}}{241920}\left(2^{-s-3}-1\right)\pi^6 \zeta(s+4), \notag 
\end{align*}
and 
\begin{align*}
& 2\bigg\{\zeta_3((s,2,2,2,2,2),\lambda_3^\vee;SU(4))+\zeta_3((2,s,2,2,2,2),\lambda_3^\vee;SU(4)) \\
& \quad +\zeta_3((2,2,s,2,2,2),\lambda_3^\vee;SU(4)) +\zeta_3((2,2,2,s,2,2),\lambda_3^\vee;SU(4)) \notag\\
& \quad +\zeta_3((2,2,2,2,s,2),\lambda_3^\vee;SU(4)) +\zeta_3((2,2,2,2,2,s),\lambda_3^\vee;SU(4))\bigg\} \\
& = \left(372\cdot 2^{-s-10} + 306\right)\left(2^{-s-9}-1\right)\zeta(s+10)
+100\pi L(s+9,\chi_4)  \notag\\
& \quad +\left(7\cdot 2^{-s-8} + \frac{32}{3}\right)\left(2^{-s-7}-1\right)\pi^2\zeta(s+8)+ \frac{17}{6}\pi^3 L(s+7,\chi_4)   \notag\\
& \quad + \left(\frac{113\cdot 2^{-s-6}}{1440}+ \frac{1}{288}\right)\left(2^{-s-5}-1\right)\pi^4 \zeta(s+6) \notag\\
& \quad + \frac{1}{32}\pi^5 L(s+5;\chi_4)+\frac{289\cdot 2^{-s-4}}{241920}\left(2^{-s-3}-1\right)\pi^6 \zeta(s+4). \notag 
\end{align*}
Note that the reason why the right-hand sides of the above two formulas are the same is given by \eqref{change-1}. 
Setting $(a,b,c,p,q,s)=(2,2,2,2,2,s)$ in \eqref{Fq-PU4}, we obtain 
\begin{align*}
& 2\bigg\{\zeta_3((s,2,2,2,2,2),{\bf 0};PU(4))+\zeta_3((2,s,2,2,2,2),{\bf 0};PU(4)) \\
& \quad +\zeta_3((2,2,s,2,2,2),{\bf 0};PU(4)) +\zeta_3((2,2,2,s,2,2),{\bf 0};PU(4)) \notag\\
& \quad +\zeta_3((2,2,2,2,s,2),{\bf 0};PU(4)) +\zeta_3((2,2,2,2,2,s),{\bf 0};PU(4)) \bigg\} \notag\\
& =\left(-\frac{93\cdot 2^{-2s}}{262144} + \frac{33\cdot 2^{-s}}{512} + 306\right)\zeta(s+10) -50\pi L(s+9,\chi_4)\\
& \quad + \left(-\frac{7\cdot 2^{-2s}}{65536} - \frac{19\cdot 2^{-s}}{1536} - \frac{49}{3}\right)\pi^2\zeta(s+8) - \frac{17}{12}\pi^3 L(s+7,\chi_4)\\
& \quad + \left(-\frac{113\cdot 2^{-2s}}{5898240} - \frac{323\cdot 2^{-s}}{61440} + \frac{353}{576}\right)\pi^4\zeta(s+6) - \frac{\pi^5}{64} L(s+5,\chi_4)\\
& \quad + \left(-\frac{289\cdot 2^{-2s}}{61931520} -  \frac{31\cdot 2^{-s}}{7741440} + \frac{1}{720}\right)\pi^6\zeta(s+4). 
\end{align*}
In particular, setting $s=2$, we again obtain \eqref{val-PU4}.
\end{example}

\begin{example} \label{Ex-6-2} \rm 
Similarly to Example \ref{Exam-A3}, We obtain, for example, 
\begin{align*}
& \zeta_3((2,3,3,3,3,3),\lambda_1^\vee;SU(4))= \zeta_3((2,3,3,3,3,3),\lambda_3^\vee;SU(4)) \\
& \quad =\frac{2125\pi^8}{45097156608}\zeta(9) + \frac{11\pi^7}{15360}\,L(10,\chi_4) - \frac{440049247\pi^6}{225485783040}\zeta(11) \\
& \qquad + \frac{13\pi^5}{96}\,L(12,\chi_4) - \frac{1056786549\pi^4}{2147483648}\zeta(13) + 11\pi^3\,L(14,\chi_4) \\
& \qquad - \frac{199887481225\pi^2}{4294967296}\zeta(15) + 399\pi\,L(16,\chi_4) - \frac{2424501730875}{2147483648}\zeta(17),
\end{align*}
and 
\begin{align*}
& \zeta_3((2,3,3,3,3,3),\{{\bf 0}\};PU(4)) \\
& \quad =\frac{1111987\pi^{8}}{90194313216}\zeta(9) - \frac{11\pi^{7}}{30720}L(10,\chi_4) + \frac{11180759837\pi^{6}}{1352914698240}\zeta(11)  \\
& \qquad - \frac{13\pi^{5}}{192}L(12,\chi_4) + \frac{170862984923\pi^{4}}{64424509440}\zeta(13) - \frac{11\pi^{3}}{2}L(14,\chi_4)\\
& \qquad - \frac{1504872383333\pi^{2}}{25769803776}\zeta(15)  - \frac{399\pi}{2}L(16,\chi_4) + \frac{4849040457275}{4294967296}\zeta(17).
\end{align*}
\end{example}

\

\end{document}